\documentclass[11pt]{amsart}

\usepackage {enumerate}
\usepackage[english]{babel}
\usepackage{setspace}
\usepackage[hidelinks]{hyperref}
\usepackage{caption}
\usepackage{appendix}
\usepackage{amsrefs}
\usepackage{hyperref}
\usepackage{stmaryrd}
\usepackage{stringstrings}
\onehalfspacing 

\allowdisplaybreaks

\newtheorem{theorem}{Theorem}
\newtheorem{lemma}[theorem]{Lemma}

\theoremstyle {definition}
\newtheorem{remark}[theorem]{Remark}
\newtheorem{conjecture}[theorem]{Conjecture}

\usepackage{tikz}
\usetikzlibrary{decorations.pathreplacing}

\usepackage[margin=1in]{geometry}

\allowdisplaybreaks

\newcommand{\R}{\mathbb{R}}

\newcommand{\cM}{\mathcal{M}}
\newcommand{\RR}{\mathbb{R}}

\newcommand{\eps}{\varepsilon}

\BibSpec{article}{%
	+{}{\PrintAuthors} {author}
	+{,}{ } {title}
	+{, }{\textit } {journal}
	+{}{ \parenthesize} {date}
	+{,  }{no. } {volume}
	+{,}{ } {pages}
	+{,}{ } {note}
	%	+{, }{\PrintDOI} {url}
}

\ExplSyntaxOn

%\NewExpandableDocumentCommand{\gobblefirst}{m}
%{
%		\tl_tail:n { #1 }
%	}

\ExplSyntaxOff
\BibSpec{book}{%
	+{}{\PrintAuthors}  {author}
	+{. }{}{title}
	+{,}{ }{series}
	+{,}{ vol.~}{volume}
	+{, }{\textit}{publisher}
			+{, }{}{date}
	+{. }{ } {note}
%	+{ }{ISBN \gobblefirst}{isbn}
}
\BibSpec{collection.article}{%
	+{}{\PrintAuthors}{author}
	+{, }{}{title}
	+{, }{\textit}{booktitle}
	+{, }{ \DashPages}{pages}
	+{,}{ }{series}
	+{, }{}{volume}
	+{, }{\textit}{publisher}
	+{,}{ }{date}
}

%%%%%%%%%%%%%%%%%%%%%%%%%%%%%%%%%%%  
%%%%%%%%%%%%%%%%%%%%%%%%%%%%%%%%%%%

\title{On the Minkowski inequality near the sphere}

\author{Otis Chodosh}
\address{\textnormal{Otis Chodosh  \newline \indent
		Stanford University \newline \indent
		Department of Mathematics  \newline \indent
		Building 380  \newline \indent Stanford, CA 94305, USA \newline\indent 
		\href{https://orcid.org/0000-0002-6124-7889}{https://orcid.org/0000-0002-6124-7889} \newline\indent	
		\href{mailto:ochodosh@stanford.edu}{ochodosh@stanford.edu}}
}
\author{Michael Eichmair}
\address{
	\textnormal{Michael Eichmair \newline  \indent
		University of Vienna \newline \indent
		Faculty of Mathematics  \newline \indent
		Oskar-Morgenstern-Platz 1 \newline \indent
		1090 Vienna, 	Austria  \newline\indent 
		\href{https://orcid.org/0000-0001-7993-9536}{https://orcid.org/0000-0001-7993-9536} \newline\indent	
		\href{mailto:michael.eichmair@univie.ac.at}{michael.eichmair@univie.ac.at}}
}

\author{Thomas Koerber}
\address{\textnormal{Thomas Koerber  \newline \indent
		University of Vienna \newline \indent
		Faculty of Mathematics  \newline \indent
		Oskar-Morgenstern-Platz 1 \newline \indent 1090 Vienna,	Austria \newline\indent 
		\href{https://orcid.org/0000-0003-1676-0824}{https://orcid.org/0000-0003-1676-0824} \newline \indent
		\href{mailto:thomas.koerber@univie.ac.at}{thomas.koerber@univie.ac.at}}
}

\begin{document}

\date{\today}
\begin{abstract}
We construct a sequence $\{\Sigma_\ell\}_{\ell=1}^\infty$ of closed, axially symmetric surfaces $\Sigma_\ell\subset \mathbb{R}^3$ that converges to the unit sphere in $W^{2,p}\cap C^1$ for every $p\in[1,\infty)$ and such that, for every $\ell$,
$$
\int_{\Sigma_{\ell}}H_{\Sigma_\ell}-\sqrt{16\,\pi\,|\Sigma_{\ell}|}<0
$$
where $H_{\Sigma_\ell}$ is the mean curvature of $\Sigma_\ell$.
 This shows that unless additional convexity assumptions are imposed, the Minkowski inequality with optimal constant fails even for perturbations  of a round sphere that are small in $W^{2,p}\cap C^1$.
\end{abstract}
\maketitle
\section{Introduction}
Let $\Sigma\subset\R^3$ be a  closed surface and
\[
\cM(\Sigma) = \int_\Sigma H - \sqrt{16\,\pi\, |\Sigma|} 
\]
where $|\Sigma|$ is the area of $\Sigma$ and  $H\in C^\infty(\Sigma)$ is the mean curvature computed with respect to the outward unit normal $\nu$. We denote the second fundamental form of $\Sigma$ with respect to $\nu$ by $h$ and the traceless second fundamental form of $\Sigma$ by $\mathring{h}$.    \\ \indent 
Let $S=\{x\in \mathbb{R}^3:|x|=1\}$ be the unit sphere centered at the origin. Given $u\in C^\infty(S)$, we denote by 
$$
\Sigma(u)=\{(1+u(x))\,x:x\in S\}
$$
 the normal graph of $u$ above $S$.  \\ \indent 
 Note that $\cM(S)=0$. A classical result of H.~Minkowski \cite[\S7]{Minkowski} asserts that, provided that $\Sigma$ is convex, 
 \begin{align} \label{minkowski inequality}\cM(\Sigma)\geq 0\text{ with equality if and only if } \Sigma\text{ is a round sphere.} \end{align}    
P.~Guan and J.~Li \cite[Theorem 2]{GuanLi} have proven that \eqref{minkowski inequality} holds provided that $\Sigma$ is star-shaped and mean-convex. G.~Huisken has found yet not published a proof of \eqref{minkowski inequality} based on inverse mean curvature flow in the case where $\Sigma$ is outward-minimizing; see, e.g., \cite[Theorem 6]{GuanLi}. A.~Freire and F.~Schwartz have independently found a different proof of this, also based on inverse mean curvature flow; see \cite[Theorem 5 and the remark on p.~119]{FreireSchwartz}.  J.~Dalphin, A.~Henrot, S.~Masnou, and T.~Takahashi \cite[Theorem 1.1]{DalphinHenrotMasnouTakahashi} have established \eqref{minkowski inequality} in the case where $\Sigma$ is axially symmetric and such that $\Sigma\cap\Pi$ is connected for every affine plane $\Pi$ orthogonal to the axis of symmetry.\\
\indent By contrast, there are closed surfaces $\Sigma\subset \mathbb{R}^3$ with $\mathcal{M}(\Sigma)<0$. The following examples of such surfaces have been constructed in \cite[Theorem 1.2]{DalphinHenrotMasnouTakahashi}. 
\begin{theorem}\label{DHMT}
There is a sequence $\{\Sigma_\ell\}_{\ell=1}^\infty$ of closed surfaces $\Sigma_\ell\subset\mathbb{R}^3$ with $|\Sigma_\ell|=4\,\pi$ and $\mathcal{M}(\Sigma_\ell)\to-\infty$. There is a sequence $\{\tilde \Sigma_\ell\}_{\ell=1}^\infty$ of  closed axially symmetric surfaces $\tilde \Sigma_\ell\subset\mathbb{R}^3$ with $|\tilde \Sigma_\ell|=4\,\pi$ and $\mathcal{M}(\tilde \Sigma_\ell)\to-8\,\pi$.
\end{theorem}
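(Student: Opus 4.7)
The key ingredient is that a cylindrical surface of radius $r$ and length $L$ with outward unit normal pointing toward its axis contributes $-2\pi L$ to $\int_\Sigma H$ while adding only $2\pi rL$ to area, and that a sphere of radius $R$ with outward unit normal pointing toward its center contributes $-8\pi R$ to $\int_\Sigma H$. The plan is to construct both sequences using configurations dominated by such negative contributions.

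\textbf{Part 1.} Starting from the unit sphere $S$, attach $N_\ell$ disjoint thin inward-pointing cylindrical fingers of radius $r_\ell$ and length $L_\ell$, each terminating in a small hemispherical cap. With the choice $N_\ell=\ell$, $r_\ell=1/\ell^{2}$, and $L_\ell=1/2$, the modified surface has area tending to $4\pi$ while $\int H\to -\infty$, since each finger contributes $\approx -2\pi L_\ell$ to $\int H$ and $\approx 2\pi r_\ell L_\ell$ to area (the terminal cap contributes $O(r_\ell)$ to $\int H$ and $O(r_\ell^{2})$ to area). Smoothing the corners at the sphere-finger junctions and rescaling to exact area $4\pi$ by a factor $\lambda_\ell\to 1$ then yields $\mathcal{M}(\Sigma_\ell)\to -\infty$.

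\textbf{Part 2.} Axial symmetry prevents using many disjoint fingers, so I instead use two concentric axially symmetric spheres joined by a single thin axial tube. Specifically, let $\tilde\Sigma_\ell$ be the smoothed surface consisting of an outer sphere $A$ of radius $R_A=1$, an inner sphere $B$ of radius $R_B=1-1/\ell$, both centered at the origin, with small polar disks removed, connected by a thin cylindrical tube of radius $r_\ell\to 0$ along the $z$-axis. The enclosed region of $\tilde\Sigma_\ell$ is the spherical shell between $A$ and $B$ with the tube's interior removed; consequently, the outward unit normal of $\tilde\Sigma_\ell$ points outward on $A$, inward (toward the origin) on $B$, and toward the axis on the tube, giving $\int_{\tilde\Sigma_\ell} H\approx 8\pi R_A-8\pi R_B-2\pi(R_A-R_B)=6\pi/\ell\to 0$. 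Meanwhile, the total area approaches $8\pi$; rescaling by $\lambda_\ell\to 1/\sqrt{2}$ to achieve area $4\pi$ preserves the limit $\int H\to 0$, and hence $\mathcal{M}(\tilde\Sigma_\ell)=\int H-8\pi\to -8\pi$.

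\textbf{Main obstacle.} The construction is built from piecewise-smooth surfaces with corners at the various sphere-finger and sphere-tube junctions. Smoothing these to obtain $C^\infty$ closed embedded surfaces requires a standard mollification, but one must verify that the corrections to $\int H$ and area are of lower order than the main contributions uniformly in $\ell$. This is the principal technical point; once it is verified, both limits follow from the elementary computations above.
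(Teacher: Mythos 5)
The paper does not contain a proof of this statement; Theorem~\ref{DHMT} is quoted directly from Dalphin--Henrot--Masnou--Takahashi \cite[Theorem 1.2]{DalphinHenrotMasnouTakahashi} with a citation and no argument. So there is no ``paper's own proof'' to compare against, and the right question is simply whether your construction is sound.

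It is, and both halves are the natural ``back of the envelope'' constructions. In Part~1 the orientation and scaling are correct: an inward-pointing cylindrical dent of radius $r_\ell$ and length $L_\ell$, seen with the outward normal of the enclosed region, has $H=-1/r_\ell$, so it contributes $-2\pi L_\ell$ to $\int H$ and only $2\pi r_\ell L_\ell$ to the area; with $N_\ell=\ell$, $L_\ell=1/2$, $r_\ell=\ell^{-2}$ the total $\int H$ is $\approx 8\pi - \pi\ell\to-\infty$ while the area tends to $4\pi$, and the final rescaling by $\lambda_\ell\to 1$ changes nothing. (One should check the dents can be taken pairwise disjoint, but with angular separation $\gg r_\ell$ this is immediate; alternatively one can take them parallel rather than radial.) In Part~2 the bookkeeping is again correct: $\int_A H = 8\pi R_A$, $\int_B H = -8\pi R_B$ (inward normal), and the short tube of length $R_A-R_B=1/\ell$ contributes $-2\pi/\ell$, giving $\int H\to 0$ and area $\to 8\pi$, hence $\mathcal{M}\to-8\pi$ after rescaling; the surface is axially symmetric and of genus zero as required. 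Your identification of the main technical point is also right: each corner (sphere--cylinder or sphere--tube junction) is an axially symmetric convex corner of the enclosed region, and rounding it with a fillet of radius $\eps_\ell$ contributes $O(r_\ell)$ to $\int H$ (the $1/\eps_\ell$ curvature integrates against $O(r_\ell\eps_\ell)$ area) and $O(r_\ell\eps_\ell)$ to area, independently of $\eps_\ell$. Summing over $N_\ell=\ell$ fingers in Part~1 gives total smoothing error $O(\ell r_\ell)=O(1/\ell)\to 0$; in Part~2 there are $O(1)$ junctions so the error is $O(r_\ell)\to 0$. Thus the smoothing corrections are genuinely lower order uniformly in $\ell$, and the proposal closes. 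The only remark worth adding is that, since the inner sphere and all the dents/tubes carry $H<0$, none of these surfaces is mean-convex, which is consistent with Theorem~\ref{simon theorem} and Conjecture~\ref{mean-convex conjecture}.
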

\indent It is natural to ask whether the Minkowski inequality \eqref{minkowski inequality} holds for surfaces in a neighborhood of $S$. Since every $C^2$-perturbation of the round sphere is strictly convex, we consider lower regularity perturbations. In this direction, F.~Glaudo \cite[Appendix A]{Glaudo} has recently proven the following result.
\begin{theorem} \label{glaudo} There is a sequence $\{u_\ell\}_{\ell=1}^\infty$ of functions $u_\ell\in C^\infty(S)$ with the following properties.
	\begin{itemize}
		\item[$\circ$] $\mathcal{M}(\Sigma(u_\ell))\to-\infty$ %\leq -1$ for every $\ell$
			\item[$\circ$] $|u_\ell|_{C^1(S)}\to0$
	\end{itemize}
	\end{theorem}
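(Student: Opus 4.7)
The plan is to construct $u_\ell$ by placing many small, radial bumps on $S$, each of which is $C^1$-small but contributes negatively to the third-order part of $\mathcal{M}$; these contributions accumulate to $-\infty$.

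First, I expand $\mathcal{M}(\Sigma(u)) = Q_2(u) + Q_3(u) + O(u^4)$ around $u = 0$. A direct computation of the radial graph area element $(1+u)\sqrt{(1+u)^2 + |\nabla u|^2}$ shows, via exact cancellation, that there is no cubic term in $u$:
\[
|\Sigma(u)| = 4\pi + 2 \int_S u + \int_S u^2 + \tfrac{1}{2}\int_S |\nabla u|^2 + O(u^4),
\]
while the first-order term of $\int_{\Sigma(u)} H\, d\mu$ also equals $2\int_S u$, so the linear parts cancel in $\mathcal{M}$. The form $Q_2$ is non-negative because every $C^2$-small perturbation of $S$ is strictly convex and hence satisfies $\mathcal{M} \geq 0$; moreover, since the integrand $H\, dA$ depends only linearly on $\nabla^2 u$, no $(\Delta u)^2$-term appears in $Q_2$, yielding the bound $Q_2(u) \lesssim \|u\|^2_{H^1(S)}$.

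For a single radial bump $u_{\rho, \eps}(r) = \eps\, \phi(r/\rho)$ supported in a geodesic disk of radius $\rho$ on $S$, with monotonically decreasing $\phi \in C^\infty_c([0,1))$, I work in local Euclidean coordinates (accurate up to $O(\rho^2)$ corrections). The graph formula for mean curvature, after collecting all cubic terms, simplifies in the radial case to the pointwise integrand $-(u')^2 u''$; integration by parts over $[0,\rho]$ using $u'(0) = u'(\rho) = 0$ gives a cubic contribution to $\int H\, d\mu$ of
\[
\tfrac{2\pi}{3}\int_0^\rho (u')^3\, dr = \tfrac{2\pi}{3}\,\tfrac{\eps^3}{\rho^2}\int_0^1 \phi'(s)^3\, ds.
\]
Since $|\Sigma|$ has no cubic term, the cubic correction to $\sqrt{16\pi|\Sigma|}$ per bump is of the strictly smaller order $\eps^3\rho^2$ (coming from $-A_1 A_2/(8\pi)$ with $A_1 \sim \eps\rho^2$ and $A_2 \sim \eps^2$). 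Our monotone choice of $\phi$ yields $\int_0^1 \phi'(s)^3\, ds < 0$, hence $Q_3(u_{\rho,\eps}) \leq -c\,\eps^3/\rho^2$ for some $c > 0$, while $Q_2(u_{\rho,\eps}) = O(\eps^2)$.

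Next, place $N_\rho \sim \rho^{-2}$ disjoint copies of this bump at well-separated centers on $S$, and define $u_\ell$ as their sum with $\rho = \rho_\ell \to 0$ and $\eps_\ell = \rho_\ell^{5/4}$. Disjointness makes the local contributions additive, and the non-local cross-terms in the square-root expansion of $\sqrt{16\pi|\Sigma|}$ are sub-leading at this scaling. Hence
\[
\mathcal{M}(\Sigma(u_\ell)) = -c\,\eps_\ell^3/\rho_\ell^4 + O(\eps_\ell^2/\rho_\ell^2) = -c\,\rho_\ell^{-1/4} + O(\rho_\ell^{1/2}),
\]
while $|u_\ell|_{C^1(S)} = O(\eps_\ell/\rho_\ell) = O(\rho_\ell^{1/4}) \to 0$; this gives $\mathcal{M}(\Sigma(u_\ell)) \to -\infty$. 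The main obstacle is the single-bump cubic computation: showing that the cubic of $\int H\, d\mu$ carries a factor $\rho^{-2}$ while the cubic of $|\Sigma|$ vanishes outright, so the accumulated cubic dominates the quadratic in the scaling window $\rho^{4/3} \ll \eps \ll \rho$. This asymmetry—the entire source of divergence—arises because the cubic corrections to the mean curvature graph formula involve integrands like $|\nabla u|^2\, \Delta u$ which, after integration by parts for a radial profile, reduce to $\int (u')^3\, dr$ and scale as $\eps^3/\rho^2$, whereas the cubic area contributions collectively cancel.
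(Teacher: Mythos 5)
The paper does not prove Theorem~\ref{glaudo} directly; it cites F.~Glaudo and summarizes the construction only as ``adding dents with very negative mean curvature to the sphere.'' Your approach---many small, well-separated radial perturbations with $N\sim\rho^{-2}$, $\eps=\rho^{5/4}$, $\rho\to 0$---is in exactly that spirit, and your scaling analysis is sound: $|u_\ell|_{C^1}\sim\rho^{1/4}\to 0$, the accumulated cubic is of size $\rho^{-1/4}$, the quadratic is $O(\rho^{1/2})$, and the cross-terms produced by the square root are $O(\rho^{7/4})$, all subleading.

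There is, however, a sign error in the key cubic computation, and it reverses the conclusion. The cubic part of $H(u)\sqrt{\det g(u)}$ is $\nabla du(\nabla u,\nabla u)$ (see the proof of Lemma~\ref{expansion}); for a radial profile $u=u(r)$ this equals $+(u')^2u''$, not $-(u')^2u''$ as you wrote, and $2\pi\int_0^\rho(u')^2u''\,r\,dr=-\tfrac{2\pi}{3}\int_0^\rho(u')^3\,dr$ after integration by parts. For your outward bump $u=\eps\,\phi$ with $\phi$ decreasing one has $u'<0$ and $\int_0^\rho(u')^3\,dr<0$, so the cubic contribution is \emph{positive}: each bump pushes $\mathcal{M}$ up, not down. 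You must instead use inward dents, $u=-\eps\,\phi$ with $\phi\geq 0$ decreasing (so $u'>0$, $\int(u')^3>0$, cubic negative), which is precisely what the paper attributes to Glaudo. With that correction your argument goes through as intended. A secondary remark: the stated justification for $Q_2(u)\lesssim\|u\|_{H^1(S)}^2$ is incomplete---the convexity observation gives only $Q_2\geq 0$. The operative fact is the other one you mention, that $H\sqrt{\det g(u)}$ is affine in $\nabla^2 u$, so the quadratic-in-$u$ terms integrate by parts into $W^{1,2}$-quantities; this is exactly what Lemma~\ref{expansion} supplies.
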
 
The goal of this paper is to establish the following related result.
\begin{theorem} \label{main result} There is a sequence $\{u_\ell\}_{\ell=1}^\infty$ of axially symmetric functions $u_\ell\in C^\infty(S)$ with the following properties. 
	\begin{itemize}
	\item[$\circ$] $\cM(\Sigma(u_\ell))<0\text{ for every }\ell$ %\leq -1$ for every $\ell$
	\item[$\circ$] $|u_\ell|_{W^{2,p}(S)}\to0$ for every $p\in[1,\infty)$
\end{itemize}

\end{theorem}
\begin{remark} 
Note that, in particular,
	\begin{align*}
		&\circ \qquad |u_\ell|_{C^1(S)}\to0 \text{ and} \qquad \qquad\qquad\qquad\qquad\qquad\qquad\qquad\qquad\qquad\qquad\qquad\qquad\\[-2pt]
&\circ\qquad \int_{\Sigma(u_\ell)}|\mathring{h}|^2\to 0.
	\end{align*}
\end{remark} 

Our interest in pursuing Theorem \ref{main result} stems from the following result due to the first-named and the second-named author; see \cite[Proposition E.4]{angstnomore}.
\begin{theorem}\label{W22 small almost Mink}
Let $\{u_\ell\}_{\ell=1}^\infty$ be a sequence   of functions $u_\ell \in C^\infty(S)$ with $|u_\ell|_{C^1(S)} \to0$. Then, as $\ell\to\infty$, 
\[
\cM(\Sigma(u_\ell)) \geq - o(1) \int_{\Sigma(u_\ell)} |\mathring{h}|^2 .
\]
\end{theorem}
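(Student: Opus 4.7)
My plan is to leverage the algebraic identity
\[
\cM(\Sigma)\left[\int_\Sigma H + \sqrt{16\,\pi\,|\Sigma|}\right] = |\Sigma|\left[2\int_\Sigma|\mathring h|^2 - \int_\Sigma(H - \bar H)^2\right],
\]
where $\bar H = |\Sigma|^{-1}\int_\Sigma H$. This follows from factoring $(\int H)^2 - 16\pi|\Sigma|$ as $(\int H - \sqrt{16\pi|\Sigma|})(\int H + \sqrt{16\pi|\Sigma|})$ and combining $\int H^2 = |\Sigma|\bar H^2 + \int(H - \bar H)^2$ with the Willmore identity $\int H^2 = 16\pi + 2\int|\mathring h|^2$ (itself a consequence of Gauss--Bonnet $\int K = 4\pi$ together with the 2D pointwise identity $\tfrac12 H^2 = 2K + |\mathring h|^2$). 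Because $\|u_\ell\|_{C^1}\to 0$ forces $|\Sigma(u_\ell)|\to 4\pi$ and $\int_{\Sigma(u_\ell)}H\to 8\pi$, the prefactor $|\Sigma|/(\int H + \sqrt{16\pi|\Sigma|})$ converges to $1/4$. Thus the theorem reduces to the asymptotic inequality
\[
2\int_{\Sigma(u_\ell)}|\mathring h|^2 - \int_{\Sigma(u_\ell)}(H - \bar H)^2 \geq -o(1)\int_{\Sigma(u_\ell)}|\mathring h|^2.
\]

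To establish this I would expand $H$ and $\mathring h$ on $\Sigma(u)$ in powers of $u$, writing
\[
H = 2 - 2u - \Delta_S u + N(u),\qquad \mathring h_{ij} = -\bigl(\nabla^2 u - \tfrac12\Delta_S u\, g_S\bigr)_{ij} + M(u)_{ij},
\]
where each summand of $N(u)$ and $M(u)$ is pointwise dominated by $\|u\|_{C^1}$ times a linear expression in $(u, \nabla u, \nabla^2 u)$. Decomposing $u = \sum_{k,m}a_{k,m}Y_{k,m}$ into spherical harmonics on $S$ (with $-\Delta_S Y_{k,m} = k(k+1)Y_{k,m}$), the leading quadratic-in-$u$ parts of the two $L^2$ norms are
\[
\int(H - \bar H)^2 \sim \sum_{k\geq 1,m}(k(k+1) - 2)^2 a_{k,m}^2,\quad 2\int|\mathring h|^2 \sim \sum_{k\geq 1,m}k(k+1)(k(k+1) - 2)\, a_{k,m}^2.
\]
Their difference equals $2\sum_{k\geq 2, m}(k(k+1) - 2)\,a_{k,m}^2 \geq 0$, with equality only on the scaling mode $k = 0$ and the translation modes $k = 1$ (consistent with the invariance of $\cM$ and $\int|\mathring h|^2$ under dilation and ambient translation). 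This settles the reduced inequality at the leading quadratic order.

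The cubic and higher corrections arise from pairing the linear parts with $N(u)$, $M(u)$, and with the nonlinear part of the area element $dA = (1 + 2u + \tfrac12|\nabla u|^2 + \mathrm{h.o.t.})\,d\sigma_S$. Each such term is pointwise of size $\|u\|_{C^1}$ multiplied by a quadratic expression in $(u, \nabla u, \nabla^2 u)$. The key absorption estimate is
\[
\int_S\bigl(u_\perp^2 + |\nabla u_\perp|^2 + |\nabla^2 u_\perp|^2\bigr)\,d\sigma_S \leq C\int_{\Sigma(u)}|\mathring h|^2,
\]
where $u_\perp$ is the projection of $u$ onto spherical harmonics of degree $\geq 2$. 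On these modes this follows from the spectral bound $2(\lambda_k - 1)/(\lambda_k - 2) \leq 5/2$ with $\lambda_k = k(k+1)$, $k\geq 2$, the Bochner identity $\int_S|\nabla^2 v|^2 = \int_S(\Delta v)^2 - \int_S|\nabla v|^2$ on $S$, and Poincar\'e. Each cubic error is then bounded by $\|u\|_{C^1}\int_{\Sigma(u)}|\mathring h|^2 = o(1)\int_{\Sigma(u)}|\mathring h|^2$.

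The main technical obstacle I anticipate is the careful bookkeeping of the cubic remainders. Integrands such as $u\,(\nabla^2 u)(\nabla^2 u)$ or $|\nabla u|^2\,\Delta u$ require appropriate pairing of the $\|u\|_{C^1}$ factor with $u_\perp$ or its derivatives, sometimes after integration by parts, to fit the absorption estimate. In addition, cross terms mixing the $k\leq 1$ and $k\geq 2$ components of $u$ need to be handled, ideally by using the dilation and translation invariances of $\cM$ and $\int|\mathring h|^2$ to reduce to the case $u = u_\perp$. Once these estimates are verified, combining the leading-order positivity of the bracket with the $o(1)$ cubic remainder gives $\cM(\Sigma(u_\ell)) \geq -o(1)\int_{\Sigma(u_\ell)}|\mathring h|^2$, as desired.
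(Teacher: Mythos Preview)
The paper does not prove this theorem; it is quoted from \cite[Proposition~E.4]{angstnomore}. So there is no in-paper proof to compare against. That said, your reduction is exactly the one the paper records right after Theorem~6: the Gauss--Bonnet identity
\[
\int_\Sigma (H-\bar H)^2 = 2\int_\Sigma |\mathring h|^2 - \sqrt{64\pi}\,|\Sigma|^{-1/2}\,\cM(\Sigma) - |\Sigma|^{-1}\cM(\Sigma)^2
\]
shows that the statement is equivalent, for surfaces $C^1$-close to $S$, to the near-sharp Schur estimate $\int_\Sigma (H-\bar H)^2 \le (2+o(1))\int_\Sigma |\mathring h|^2$. Your spectral computation of the quadratic part is correct: on the $k$-th eigenspace the difference $2\int|\mathring h|^2 - \int(H-\bar H)^2$ linearizes to $2(\lambda_k-2)\,a_k^2 \ge 0$, vanishing only for $k\le 1$.

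Your outline is sound, but two points deserve more care than you indicate. First, the normalization ``reduce to $u=u_\perp$ by dilation and translation'' is nonlinear: translating $\Sigma(u)$ by $a$ and rescaling by $\lambda$ produces $\Sigma(\tilde u)$ where $\tilde u$ depends nonlinearly on $(u,a,\lambda)$. You need an implicit-function / slice argument to choose $(a,\lambda)$ so that $\int_S \tilde u = 0$ and $\int_S \tilde u\,x^i = 0$ exactly; this works because the linearization of this map at $(0,0,1)$ is invertible and $\|u\|_{C^1}$ is small. Second, your absorption estimate $\|u_\perp\|_{W^{2,2}}^2 \le C\int_{\Sigma(u)}|\mathring h|^2$ is not circular, but you should make the order of logic explicit: from $\int_\Sigma |\mathring h|^2 = \int_S |\mathring\nabla^2 u|^2 + O(\|u\|_{C^1})\,\|u\|_{W^{2,2}}^2$ and the spectral lower bound $\int_S |\mathring\nabla^2 u_\perp|^2 \ge c\,\|u_\perp\|_{W^{2,2}}^2$ one gets $\int_\Sigma |\mathring h|^2 \ge (c - C\|u\|_{C^1})\|u_\perp\|_{W^{2,2}}^2$, which is the inequality you need once $\|u\|_{C^1}$ is small. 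With these two points secured, the remaining cubic bookkeeping is routine and your argument goes through.
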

C.\ De Lellis and S.\ M\"uller \cite[Theorem 1.1]{DeLellisMuller} have proven the following quantitative Schur estimate. \begin{theorem} There exists a constant $c\geq 2$ such that, for every closed surface $\Sigma\subset \mathbb{R}^3$,
\begin{align} \label{qs schur estimate}
	\int_{\Sigma}\left(H-|\Sigma|^{-1}\,\int_{\Sigma} H\right)^2\leq c\,\int_{\Sigma}|\mathring{h}|^2.
\end{align}
\end{theorem}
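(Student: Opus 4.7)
The plan is to combine an algebraic reduction based on the Gauss equation and the Gauss-Bonnet formula with an analytic estimate derived from the Codazzi equation. For the algebraic step, I would use the pointwise identity $K = \tfrac{1}{4}H^2 - \tfrac{1}{2}|\mathring{h}|^2$, a consequence of the Gauss equation for surfaces in $\mathbb{R}^3$, together with the Gauss-Bonnet theorem to obtain
\begin{align*}
\int_\Sigma (H - \bar{H})^2 = \int_\Sigma H^2 - |\Sigma|\,\bar{H}^2 = 2\int_\Sigma |\mathring{h}|^2 + 8\pi\,\chi(\Sigma) - |\Sigma|\,\bar{H}^2,
\end{align*}
where $\bar{H} := |\Sigma|^{-1}\int_\Sigma H$. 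When $\chi(\Sigma) \leq 0$, that is, when $\Sigma$ is not a topological sphere, the right-hand side is bounded above by $2\int_\Sigma |\mathring{h}|^2$, so \eqref{qs schur estimate} holds with $c = 2$. Since \eqref{qs schur estimate} is invariant under the rescaling $\Sigma \mapsto \lambda\,\Sigma$, the remaining case reduces to proving, for topological spheres with $|\Sigma| = 4\pi$, the quantitative lower bound $\bar{H}^2 \geq 4 - \tfrac{c - 2}{4\pi}\int_\Sigma |\mathring{h}|^2$.

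For the analytic step, the key input is the Codazzi identity $\nabla H = 2\,\mathrm{div}(\mathring{h})$, valid for hypersurfaces in $\mathbb{R}^3$. Setting $v := H - \bar{H}$, which has vanishing mean, and solving the Poisson equation $\Delta_\Sigma u = v$ on $\Sigma$, two integrations by parts give
\begin{align*}
\int_\Sigma v^2 = -\int_\Sigma \nabla H \cdot \nabla u = 2 \int_\Sigma \mathring{h}^{ij}\,\nabla_i \nabla_j u \leq 2\,\|\mathring{h}\|_{L^2(\Sigma)}\,\|\nabla^2 u\|_{L^2(\Sigma)}
\end{align*}
by the Cauchy-Schwarz inequality. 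Hence \eqref{qs schur estimate} follows once one establishes a uniform Calder\'on-Zygmund-type inequality $\|\nabla^2 u\|_{L^2(\Sigma)} \leq C\,\|\Delta_\Sigma u\|_{L^2(\Sigma)}$ for the Poisson equation on any closed topological sphere in $\mathbb{R}^3$. Via the Bochner identity on a closed $2$-manifold, $\int_\Sigma |\nabla^2 u|^2 = \int_\Sigma (\Delta u)^2 - \int_\Sigma K\,|\nabla u|^2$, this amounts to controlling the contribution of the negative part $K^- := \max(-K,0)$ of the Gauss curvature.

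The hard part is to make this Calder\'on-Zygmund estimate uniform across all closed topological spheres in $\mathbb{R}^3$. A useful pointwise bound is $K^- \leq \tfrac{1}{2}|\mathring{h}|^2$, which follows immediately from $K = \tfrac{1}{4}H^2 - \tfrac{1}{2}|\mathring{h}|^2$, and yields $\int_\Sigma K^-\,|\nabla u|^2 \leq \tfrac{1}{2}\int_\Sigma |\mathring{h}|^2\,|\nabla u|^2$. Combining this with the Michael-Simon Sobolev inequality on $\Sigma$ and elliptic regularity for $u$ would provide a bootstrap in the regime where $\int_\Sigma |\mathring{h}|^2$ is sufficiently small. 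In the complementary regime $\int_\Sigma |\mathring{h}|^2 \geq \varepsilon_0$ for some threshold $\varepsilon_0 > 0$, the bound $4 - \bar{H}^2 \leq 4$ is trivial and \eqref{qs schur estimate} holds by taking $c \geq 2 + 16\pi/\varepsilon_0$. I anticipate the most delicate point to be the blow-up analysis needed for surfaces that develop nearly degenerate features, such as thin necks, where $|\mathring{h}|^2$ concentrates and a straightforward bootstrap on $\|\nabla u\|_{L^4}$ or $\||\mathring{h}|\|_{L^4}$ might fail.
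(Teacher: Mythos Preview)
The paper does not give a proof of this theorem; it is quoted verbatim from De Lellis--M\"uller \cite[Theorem~1.1]{DeLellisMuller} and used as a black box in the discussion following it. So there is no ``paper's own proof'' to compare your proposal against.

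As for your outline itself, the algebraic reduction via Gauss--Bonnet and the Codazzi manipulation
\[
\int_\Sigma (H-\bar H)^2 = 2\int_\Sigma \mathring h^{ij}\nabla_i\nabla_j u \le 2\,\|\mathring h\|_{L^2}\,\|\nabla^2 u\|_{L^2}
\]
are correct and are indeed the starting point of De Lellis--M\"uller. The large/small dichotomy in $\int_\Sigma|\mathring h|^2$ is also the right organizing principle. However, the step you flag as ``the hard part'' really is the entire content of the theorem, and your proposed bootstrap does not close. From Bochner you need to absorb $\int_\Sigma K^-\,|\nabla u|^2 \le \tfrac12\int_\Sigma |\mathring h|^2|\nabla u|^2$, but you only control $\|\mathring h\|_{L^2}$, which forces $\|\nabla u\|_{L^\infty}$ on the other factor; Michael--Simon gives Sobolev embeddings with constants depending on $H$, not on $|\mathring h|$, so the loop feeds $H$ back in rather than closing on $\mathring h$ alone. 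The actual De Lellis--M\"uller argument avoids this by passing to a conformal parametrization of the sphere and invoking Hardy space/div--curl estimates (in the spirit of Coifman--Lions--Meyer--Semmes and M\"uller--\v Sver\'ak) to control the Hessian of $u$; this is genuinely different machinery from a Michael--Simon bootstrap. Your proposal is therefore a correct reduction to the right analytic core, but not yet a proof.
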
 
Note that, if $\Sigma$ has genus zero, then
$$
\int_{\Sigma}\left(H-|\Sigma|^{-1}\,\int_{\Sigma} H\right)^2=2\,\int_\Sigma|\mathring{h}|^2-\sqrt{64\,\pi}\,|\Sigma|^{-1/2}\,\mathcal{M}(\Sigma)-|\Sigma|^{-1}\mathcal{M}(\Sigma)^2
$$
by the Gauss-Bonnet theorem. Consequently, given $\varepsilon>0$, Theorem \ref{W22 small almost Mink} shows that  \eqref{qs schur estimate} holds with $c=2+\varepsilon$ provided that $\Sigma$ is sufficiently close to $S$ in $W^{2,2}\cap C^1$. By contrast, in view of Theorem \ref{main result}, \eqref{qs schur estimate} is false with $c=2$, even for closed surfaces $\Sigma\subset\mathbb{R}^3$ that are $W^{2,p}\cap C^1$-close to $S$ where $p\in[1,\infty)$. See also \cite[Chapter 3]{Perez:thesis} and \cite{Glaudo} for other, related considerations. 

\begin{remark}
Theorem \ref{W22 small almost Mink} implies that the surfaces $\Sigma(u_\ell)$ constructed by F.\ Glaudo as described in Theorem \ref{glaudo} satisfy $$\int_{\Sigma(u_\ell)} |\mathring{h}|^2 \to \infty.$$
\end{remark}
Theorem \ref{W22 small almost Mink} has played a pivotal role in the classification of large stable constant mean curvature surfaces  and large area-constrained Willmore surfaces in initial data sets; see \cite[Lemma 4.8]{angstnomore} and also \cite[Lemma 9]{EichmairKoerberMetzgerSchulze}. This has led the authors to hope that there might exist $\eps_0>0$ such that \eqref{minkowski inequality} holds  provided that  
\[
\int_{\Sigma} |\mathring{h}|^2 < \eps_0.
\]
Theorem \ref{main result} dashes this hope. \\ \indent By contrast, S.~Brendle \cite{Brendle} has recently proved the Michael-Simon Sobolev inequality \cite[Theorem 2.1]{MichaelSimon} with optimal constant; see also the alternative proof of S.~Brendle and the second-named author \cite{BrendleEichmair}.   This result implies the following Minkowski inequality with non-optimal constant for mean-convex surfaces.     
\begin{theorem}[{\cite[Corollary 2]{Brendle}}]\label{simon theorem} Let $\Sigma\subset \mathbb{R}^3$ be a closed surface with non-negative mean curvature. There holds
	$$
	\int_{\Sigma} H\geq \sqrt{4\,\pi\,|\Sigma|}.
	$$
\end{theorem}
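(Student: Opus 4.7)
The plan is to derive the inequality as a direct specialization of the sharp Michael-Simon Sobolev inequality of S.~Brendle, which is already invoked in the paragraph preceding the statement and may therefore be taken as input. For a closed surface $\Sigma\subset \R^3$ and a non-negative function $f \in C^1(\Sigma)$, this sharp inequality asserts
$$\int_\Sigma \sqrt{|\nabla^\Sigma f|^2 + f^2 H^2} \geq 2\sqrt{\pi}\left(\int_\Sigma f^2\right)^{1/2},$$
where the constant $2\sqrt{\pi}$ arises from the isoperimetric profile of the flat Euclidean disk. My approach is to test this Sobolev inequality against the constant function $f \equiv 1$, which annihilates the intrinsic gradient term and collapses the general Sobolev statement into a direct comparison between $\int_\Sigma |H|$ and $|\Sigma|^{1/2}$.

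Concretely, substituting $f \equiv 1$ gives $\nabla^\Sigma f \equiv 0$ pointwise, so the left-hand side of the Michael-Simon inequality becomes $\int_\Sigma |H|$, while the right-hand side becomes $2\sqrt{\pi}\,|\Sigma|^{1/2} = \sqrt{4\pi\,|\Sigma|}$. Under the hypothesis $H \geq 0$ on $\Sigma$, the absolute value may be dropped, yielding
$$\int_\Sigma H \geq \sqrt{4\pi\,|\Sigma|},$$
which is precisely the claim.

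There is no substantive obstacle in the derivation once the sharp Michael-Simon inequality is in hand; the entire content of the theorem is carried by Brendle's result (or the alternative proof of Brendle and the second-named author), and the proof reduces to a one-line specialization. The factor $\sqrt{4\pi}$, in place of the sharp $\sqrt{16\pi}$ of the classical Minkowski inequality, reflects the fact that the Michael-Simon constant is calibrated against the flat disk rather than against the round sphere. Recovering the sharp constant without convexity hypotheses is exactly what Theorem \ref{main result} shows to be impossible, even for surfaces arbitrarily $W^{2,p}\cap C^1$-close to $S$.
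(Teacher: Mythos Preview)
Your proposal is correct and matches the paper's treatment: the paper does not supply its own proof of this theorem but cites it as \cite[Corollary 2]{Brendle}, remarking only that it follows from the sharp Michael--Simon Sobolev inequality. Your derivation---testing that inequality with $f\equiv 1$ and using $H\geq 0$ to drop the absolute value---is precisely the one-line specialization the paper alludes to.
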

Let $\Sigma\subset\mathbb{R}^3$ be a closed surface with non-negative mean curvature such that $|\Sigma|=4\,\pi$.  Theorem \ref{simon theorem} contrasts Theorem \ref{DHMT} and Theorem \ref{glaudo} in the sense that $\mathcal{M}(\Sigma)\geq-4\,\pi$.  Moreover, \cite[Theorem 2]{GuanLi} contrasts Theorem \ref{main result} in the sense that $\mathcal{M}(\Sigma)\geq 0$ provided that $\Sigma$ is $C^1$-close to $S_1(0)$. These reasons support the following conjecture; see also \cite[p.~2732]{DalphinHenrotMasnouTakahashi}.
\begin{conjecture} \label{mean-convex conjecture} Let $\Sigma\subset \mathbb{R}^3$ be a closed surface with non-negative mean curvature. There holds $\mathcal{M}(\Sigma)\geq 0$  with equality if and only if $\Sigma$ is a round sphere. 
\end{conjecture}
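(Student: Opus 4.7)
The natural strategy is to reduce the problem to Huisken's theorem (the outward-minimizing case; see \cite[Theorem~6]{GuanLi}) via the Huisken--Ilmanen strictly outward-minimizing hull. Given a closed surface $\Sigma\subset\R^3$ with $H\geq 0$ bounding a domain $\Omega$, let $\Omega^*\supset\Omega$ denote the strictly outward-minimizing hull and set $\Sigma^*=\partial\Omega^*$. Standard regularity gives $|\Sigma^*|\leq|\Sigma|$ together with a decomposition $\Sigma^*=\Sigma_c\cup T$, where $\Sigma_c=\Sigma\cap\Sigma^*$ and $T\subset\overline{\Omega^*\setminus\Omega}$ is a smooth minimal surface meeting $\Sigma$ along $\partial T$. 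Since $H=0$ on $T$ and $H\geq 0$ on $\Sigma$, one has $\int_{\Sigma^*}H=\int_{\Sigma_c}H\leq\int_\Sigma H$. As Huisken's theorem applied to $\Sigma^*$ yields $\mathcal{M}(\Sigma^*)\geq 0$, it would suffice to establish the hull monotonicity
$$
\mathcal{M}(\Sigma)\;\geq\;\mathcal{M}(\Sigma^*),
\qquad\text{equivalently}\qquad
\int_{\Sigma\setminus\Sigma_c}H\;\geq\;\sqrt{16\,\pi}\,\bigl(\sqrt{|\Sigma|}-\sqrt{|\Sigma^*|}\bigr).
$$

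For rigidity: if $\mathcal{M}(\Sigma)=0$, the chain forces $\mathcal{M}(\Sigma^*)=0$ and equality in the hull monotonicity. Huisken's rigidity then gives that $\Sigma^*$ is a round sphere, and a sharp equality analysis of the hull step (which one would establish together with the monotonicity, using that strict outward-minimizing compels $|\Sigma^*|<|\Sigma|$ whenever $T\neq\emptyset$) forces $T=\emptyset$ and hence $\Sigma=\Sigma^*$.

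The main obstacle is the hull monotonicity. The cut-off pieces $\Sigma\setminus\Sigma_c$ and the minimal caps $T$ together bound the filling region $R=\Omega^*\setminus\Omega$, and the local mean curvature on $\Sigma\setminus\Sigma_c$ can be arbitrarily small even while $|\Sigma\setminus\Sigma_c|-|T|$ is large (thin, nearly cylindrical necks are the model case), so no purely local curvature bound suffices. A promising approach is to run a \emph{free-boundary inverse mean curvature flow} inside $\overline{R}$ with $\Sigma\setminus\Sigma_c$ as initial surface and perpendicular free boundary along the minimal cap $T$: if Huisken's monotonicity of $\int H/\sqrt{|\Sigma_t|}$ can be propagated across the free-boundary condition (with the boundary terms controlled by $H_T=0$), then taking the limit $t\to\infty$ yields the required bound from the round limit in $\R^3$. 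An optimal-transport or ABP argument in the spirit of \cite{Brendle, BrendleEichmair} seems less promising, since those methods appear to lose a factor of $2$ in the Michael--Simon step precisely because they do not exploit any "inside" structure. Recovering the sharp constant from $H\geq 0$ alone appears to require a genuinely new idea, which is why Conjecture~\ref{mean-convex conjecture} remains open.
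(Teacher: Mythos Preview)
The statement you address is presented in the paper as a \emph{conjecture}, not a theorem: the authors give no proof and explicitly leave it open, offering only the supporting evidence of Theorem~\ref{simon theorem}, \cite[Theorem~2]{GuanLi}, and the axially symmetric case recorded in the remark immediately following the conjecture. Your write-up is likewise not a proof. You sketch a hull-reduction strategy, correctly isolate the missing ingredient (the inequality $\int_{\Sigma\setminus\Sigma_c}H\geq\sqrt{16\,\pi}\,(\sqrt{|\Sigma|}-\sqrt{|\Sigma^*|})$, which does \emph{not} follow from $H\geq 0$ and $|\Sigma^*|\leq|\Sigma|$ alone), note why purely local arguments cannot close it, float a speculative free-boundary inverse mean curvature flow, and then yourself conclude that the conjecture remains open. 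There is therefore nothing to compare: neither the paper nor your proposal establishes the statement, and your final sentence is in agreement with the paper's own stance.
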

\begin{remark}
	By \cite[Theorem 1.3]{DalphinHenrotMasnouTakahashi}, Conjecture \ref{mean-convex conjecture} holds if $\Sigma\subset \mathbb{R}^3$ is assumed to be closed, axially symmetric, and with non-negative mean curvature. 
\end{remark} 
\begin{remark} Let $\Sigma\subset \mathbb{R}^3$ be a closed surface with non-negative mean curvature and  $\Omega\subset\mathbb{R}^3$  the compact domain bounded by $\Sigma$. V.~Agostiniani, M.~Fogagnolo, and L.~Mazzieri \cite[Theorem 1.5]{AgostinianiFogagnoloMazzieri} have shown that \begin{align} \label{volumeminkowski} \int_{\Sigma} H\geq (384\,\pi^2\,|\Omega|)^{1/3}
	\end{align}
with equality if and only if $\Sigma$ is a round sphere.    Note that \eqref{volumeminkowski} would follow from Conjecture \ref{mean-convex conjecture} and the isoperimetric inequality.  
\end{remark} 
F.~Glaudo's proof of Theorem \ref{glaudo} is based on adding dents with very negative mean curvature to the sphere. In view of Theorem \ref{W22 small almost Mink}, the traceless second fundamental form of such dents is large in $L^2$. The construction of the examples in Theorem \ref{main result} is necessarily more subtle. \\ \indent  
We now outline the proof of Theorem \ref{main result}. Let $\{u_\ell\}_{\ell=1}^\infty$ be a sequence of functions $u_\ell\in C^\infty(S)$ with zero mean such that $|u_\ell|_{C^1(S)}\to0$. As in \cite[Appendix A]{Glaudo}, our starting point is a Taylor expansion of the functional $\mathcal{M}$ off the unit sphere $S$. Indeed, we have 
\begin{align*}% \label{taylor}  
\mathcal{M}(\Sigma(u_\ell))=-\frac12\,\int_S |\nabla u_\ell|^2\,\Delta u_\ell+O(1)\,|u_\ell|^2_{W^{1,2}(S)}+O(1)\,|u_\ell|^2_{C^1(S)}\,|u_\ell|_{W^{1,2}(S)}\,|u_\ell|_{W^{2,2}(S)};\end{align*}
see Lemma \ref{expansion}.  Given $\alpha\in(0,1)$, we consider 
$$
u_\ell=-\ell^{-2-\alpha}\,\sum_{i=0}^{2^{-5}\,\ell}v_{\ell_i}
$$
where $v_{\ell_i}\in C^\infty(S)$ are spherical harmonics whose frequency is comparable to $\ell$. A straightforward computation shows that  $|u_\ell|_{W^{2,2}(S)}\to 0$. What is more, given $p\in[1,\infty)$, we show that $|u_\ell|_{W^{2,p}(S)}\to 0$ provided that $\alpha>1-2/p$; see Lemma \ref{estimates w2p}. Next, choosing the spherical harmonics $v_{\ell_i}$ carefully, we obtain that
$$
\liminf_{\ell\to\infty}\ell^{1+3\,\alpha}\, \int_S |\nabla u_\ell|^2\,\Delta u_\ell>0
$$
while 
$$
|u_\ell|^2_{W^{1,2}(S)}+|u_\ell|^2_{C^1(S)}\,|u_\ell|_{W^{1,2}(S)}\,|u_\ell|_{W^{2,2}(S)}=O(1)\,\ell^{-2-2\,\alpha};
$$ 
see Lemma \ref{cubic term}. We note that these estimates build on precise quantitative estimates for E.~Wigner's $3j$-symbol; see Appendix \ref{3j appendix}.

\subsection*{Acknowledgments}
Otis Chodosh was supported by a Terman Fellowship. This research was  funded in whole or in part by the
Austrian Science Fund (FWF) [\href{https://www.fwf.ac.at/en/research-radar/10.55776/Y963}{10.55776/Y963}, \href{https://www.fwf.ac.at/en/research-radar/10.55776/M3184}{10.55776/M3184}, \href{https://www.fwf.ac.at/en/research-radar/10.55776/PAT9828924}{10.55776/PAT9828924}, 
\href{https://www.fwf.ac.at/en/research-radar/10.55776/PAT1307525}{10.55776/PAT1307525}].
\section{Graphs over Euclidean spheres}
Let $u\in C^\infty(S)$ and recall that $\Sigma(u)$ is the normal graph of $u$ over $S$.  Let $g(u)$ be the first fundamental form, $\nu(u)$ the outward unit normal, $h(u)$ the second fundamental form,  and $H(u)$  the mean curvature of $\Sigma(u)$. Let $g$ be the first fundamental form of $S$ and  $f(u)=\sqrt{(1+u)^2+|\nabla u|^2}$. 
\begin{lemma} The following identities hold. \label{graph identities} 
	\begin{align*} 
		 g(u)&=(1+u)^2\,g+du\otimes du\qquad\qquad\qquad\qquad\qquad\qquad\qquad\qquad\qquad\\
			g(u)^{-1}&=(1+u)^{-2}\,\left(g^{-1}-f(u)^{-2}\,\nabla u\otimes \nabla u\right)\\ 
		 \nu(u)&=f(u)^{-1}\,((1+u)\,x-\nabla u)\\
				h(u)&=f(u)^{-1}\,\big[(1+u)^2\,g+2\,du\otimes du-(1+u)\,\nabla d u\big]
	\end{align*}
\end{lemma}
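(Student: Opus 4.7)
The plan is to parametrize $\Sigma(u)$ by the map $F : S \to \mathbb{R}^3$, $F(x)=(1+u(x))\,x$, and to derive each identity by a direct Euclidean calculation, using only the elementary facts that $\langle x, V\rangle = 0$ and $D_V x = V$ in $\mathbb{R}^3$ for every vector $V$ tangent to $S$ at $x$.

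Differentiating $F$ gives $dF(V) = (1+u)\,V + du(V)\,x$. Taking inner products and using $\langle x, W\rangle = 0$ immediately yields the first identity $g(u) = (1+u)^2\, g + du\otimes du$. Since this is a rank-one perturbation of the conformal metric $(1+u)^2\, g$, I would verify the claimed inverse directly by contracting it against $g(u)$; the algebra reduces to the identity $(1+u)^2 + |\nabla u|^2 = f(u)^2$. For the normal, I propose the ansatz $N := (1+u)\,x - \nabla u$ and check $\langle N, dF(V)\rangle = 0$: the cross term $\langle (1+u)\,x,\, du(V)\,x\rangle = (1+u)\,du(V)$ cancels $\langle -\nabla u,\, (1+u)\, V\rangle = -(1+u)\,du(V)$, while the remaining contributions vanish because $\nabla u$ is tangent to $S$ and $x$ is normal to it. Then $|N|^2 = f(u)^2$ and the check $\nu(u)|_{u=0} = x$ pins down the outward orientation.

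The main step, and the main source of possible sign errors, is the second fundamental form. I compute the ambient derivative $D_V dF(W)$ along $S$ using the decomposition $D_V W = \nabla_V W - g(V,W)\,x$ valid for tangent vector fields on $S$ (the normal correction comes directly from $D_V x = V$ and $\langle V, x\rangle = 0$). The key cancellation is that the covariant-Hessian identity $V(du(W)) = \nabla du(V,W) + du(\nabla_V W)$ annihilates all $\nabla_V W$-terms once $D_V dF(W)$ is paired with $\nu(u)$, after using $\langle x, \nabla u\rangle = 0$ in the remaining cross terms. With the sign convention $h(u)(V,W) = -\langle D_V dF(W), \nu(u)\rangle$---fixed so that $h$ specializes to $g$ on $S$ (equivalently $H_S = 2$ with outward normal, as the paper's normalization $\mathcal{M}(S)=0$ requires)---this calculation produces the fourth identity. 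I would anchor the sign bookkeeping at the end by verifying the $u \equiv 0$ specialization $h(0) = g$.
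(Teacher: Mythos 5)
Your proposal is correct. The paper states Lemma \ref{graph identities} without proof (it is a classical computation for normal graphs over the round sphere), and your derivation---parametrizing by $F(x)=(1+u(x))\,x$, using $D_V x = V$ and $\langle x,V\rangle=0$ for tangent $V$, verifying the inverse metric via $f(u)^2=(1+u)^2+|\nabla u|^2$, checking $\langle N, dF(V)\rangle=0$ for $N=(1+u)\,x-\nabla u$, and then computing $h(u)(V,W)=-\langle D_V dF(W),\nu(u)\rangle$ via the sphere Gauss formula $D_V W=\nabla_V W-g(V,W)\,x$ together with the covariant Hessian identity $V(du(W))=\nabla du(V,W)+du(\nabla_V W)$---is the standard argument; the $du(\nabla_V W)$ terms cancel exactly as you describe, the sign convention is pinned down correctly by $h(0)=g$, and the result matches the stated formula.
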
 
\begin{lemma} \label{expansion} Let $\{u_\ell\}_{\ell=1}^\infty$ be a sequence of functions $u_\ell\in C^\infty(S)$ with $|u_\ell|_{C^1(S)}\to0$  and
	\begin{align*} 
	\int_S u_\ell=0
	\end{align*} 
	for all $\ell$. Then
	$$
	|\Sigma(u_\ell)|=4\,\pi+O(1)\,|u_\ell|^2_{W^{1,2}(S)}
	$$
	and 
	$$
	\int_{\Sigma(u_\ell)}H=8\,\pi+\int_S \nabla du_\ell(\nabla u_\ell,\nabla u_\ell)+O(1)\,|u_\ell|^2_{W^{1,2}(S)}+O(1)\,|u_\ell|^2_{C^1(S)}\,|u_\ell|_{W^{1,2}(S)}\,|u_\ell|_{W^{2,2}(S)}.
	$$
\end{lemma}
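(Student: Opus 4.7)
The plan is to start from the four identities in Lemma \ref{graph identities}, Taylor expand, and use the hypothesis $\int_S u_\ell = 0$ together with Stokes' theorem on the closed manifold $S$ to eliminate linear terms. For the area, the matrix-determinant lemma applied to $g(u) = (1+u)^2 g + du \otimes du$ gives $\det g(u) = (1+u)^2 f(u)^2 \det g$, so the area element of $\Sigma(u)$, pulled back to $S$, is $(1+u) f(u) \, d\mu_S$. Writing $f(u) - (1+u) = |\nabla u|^2 / ((1+u) + f(u))$ produces the exact identity
\[
(1+u) f(u) = 1 + 2u + u^2 + \frac{(1+u) |\nabla u|^2}{(1+u) + f(u)},
\]
and integration, together with $\int_S u = 0$, yields $|\Sigma(u_\ell)| = 4\pi + O(|u_\ell|^2_{W^{1,2}(S)})$ at once.

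For the mean curvature, I would contract the formulas for $g(u)^{-1}$ and $h(u)$. Using $(1+u)^2 f(u)^{-2} = 1 - f(u)^{-2} |\nabla u|^2$ to collapse two of the resulting gradient-square terms produces, after multiplying through by the area element,
\[
H(u) (1+u) f(u) = (1+u)^{-1} \left[ 2 (1+u)^2 + |\nabla u|^2 - (1+u) \Delta u - f(u)^{-2} |\nabla u|^4 + (1+u) f(u)^{-2} \nabla du (\nabla u, \nabla u) \right].
\]
Integrating term by term over $S$: the constant piece gives $8\pi + 2 \int_S u = 8\pi$; the Laplacian contribution vanishes by Stokes; and the terms $\int_S (1+u)^{-1} |\nabla u|^2$ and $\int_S (1+u)^{-1} f(u)^{-2} |\nabla u|^4$ are both controlled, using the uniform $C^0$ bound coming from $|u_\ell|_{C^1(S)} \to 0$ and the pointwise bound $|\nabla u|^2 \leq |u|^2_{C^1(S)}$, by $O(|u_\ell|^2_{W^{1,2}(S)})$. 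Finally, one splits $f(u)^{-2} \nabla du(\nabla u, \nabla u) = \nabla du(\nabla u, \nabla u) + (f(u)^{-2} - 1) \nabla du(\nabla u, \nabla u)$ and bounds the remainder via $|f(u)^{-2} - 1| \leq C |u|_{C^1(S)}$, $|\nabla u|^2 \leq |u|_{C^1(S)} |\nabla u|$, and the Cauchy--Schwarz inequality, arriving at the mixed-norm error $C |u_\ell|^2_{C^1(S)} |u_\ell|_{W^{1,2}(S)} |u_\ell|_{W^{2,2}(S)}$ stated in the conclusion.

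The computation is essentially careful bookkeeping in a Taylor expansion, and I expect the main obstacle to be recognizing the algebraic simplification $(1+u)^2 f(u)^{-2} = 1 - f(u)^{-2} |\nabla u|^2$. Without it, the expansion carries several apparently independent cubic-in-$u$ contributions that are hard to sort into the two distinct error scalings $O(|u|^2_{W^{1,2}})$ and $|u|^2_{C^1} |u|_{W^{1,2}} |u|_{W^{2,2}}$. Isolating precisely $\int_S \nabla du_\ell(\nabla u_\ell, \nabla u_\ell)$ from the expansion is essential, because this is the cubic term whose sign is exploited by the spherical-harmonic construction used to prove Theorem \ref{main result}.
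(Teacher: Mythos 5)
Your proof is correct and takes essentially the same route as the paper: both start from the identities in Lemma \ref{graph identities}, write $\sqrt{\det g(u_\ell)}=(1+u_\ell)\,f(u_\ell)$, expand $H(u_\ell)\sqrt{\det g(u_\ell)}$, integrate using $\int_S u_\ell=0$ and $\int_S\Delta u_\ell=0$, and control the remainder with Cauchy--Schwarz to obtain the mixed $|u_\ell|^2_{C^1}\,|u_\ell|_{W^{1,2}}\,|u_\ell|_{W^{2,2}}$ error. The only difference is cosmetic: you carry out exact algebraic cancellations (via $(1+u)^2 f^{-2}=1-f^{-2}|\nabla u|^2$), where the paper simply invokes Taylor's theorem with $O(1)$ coefficients.
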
 
\begin{proof}
	Note that
	$$
	\sqrt{\det g(u_\ell)}=(1+u_\ell)\,f(u_\ell)
	$$
	and
	\begin{align*} 
	H(u_\ell)&=2\,f(u_\ell)^{-1}+f(u_\ell)^{-3}\,|\nabla u_\ell|^2\\&\qquad -(1+u_\ell)^{-1}\,f(u_\ell)^{-1}\,\Delta u_\ell  +(1+u_\ell)^{-1}\,f(u_\ell)^{-3}\,\nabla du_\ell(\nabla u_\ell,\nabla u_\ell ).
	\end{align*} 
Using Taylor's theorem, we obtain
$$
\sqrt{\det g(u_\ell)}=1+2\,u_\ell+O(1)\,(|u_\ell|^2+|\nabla u_\ell|^2)
$$
and
\begin{align*} 
H(u_\ell)\,\sqrt{\det g(u_\ell)}
&=2\,(1+u_\ell)-\Delta u_\ell+\nabla d u_\ell(\nabla u_\ell,\nabla u_\ell)\\&\qquad+O(1)\,u_\ell^2 +O(1)\,|\nabla u_\ell|^2+O(1)\,(|u_\ell|+|\nabla u_\ell|^2)\,|\nabla u_\ell|^2\,|\nabla^2 u_\ell|.
\end{align*}
The assertion follows from these estimates, using that 
$$
\int_S \Delta u_\ell=0
$$
and 
\begin{align*} 
\int_S  (|u_\ell|+|\nabla u_\ell|^2)\,|\nabla u_\ell|^2\,|\nabla^2 u_\ell|&=O(1)\, |u_\ell|^2_{C^1(S)}\,\int_S  |\nabla u_\ell|\,|\nabla^2 u_\ell|
\\&=O(1)\, |u_\ell|^2_{C^1(S)}\,|u_\ell|_{W^{1,2}(S)}\,|u_\ell|_{W^{2,2}(S)}.
\end{align*} 
\end{proof}
\section{Proof of Theorem \ref{main result} }
Let $k\geq 7$ and $\ell=2^k$. \\ \indent Given an integer $0\leq i\leq 2^{-5}\, \ell,$ we let $\ell_i=\ell+4\,i$. Note that \begin{align*} \ell_i=0\,\,\text{mod}\,\, 4\qquad\text{and} \qquad \ell\leq \ell_i\leq\tfrac98\, \ell.\end{align*}  \indent 
Recall the definitions of $\Lambda_{\ell_i}\subset C^\infty(S)$ and $v_{\ell_i}\in \Lambda_{\ell_i}$ from Appendix \ref{spherical harmonics} and the  definition  of E.~Wigner's $3j$-symbol from Appendix \ref{3j appendix}.\\
 \indent 
Let $\alpha\in(0,1)$.  We define $u_\ell\in C^\infty(S)$ by 
$$
u_\ell=-\ell^{-2-\alpha}\sum_{i=0}^{2^{-5}\,\ell}v_{\ell_i}.
$$
\begin{lemma} There holds \label{estimates} $|u_\ell|_{W^{1,2}(S)}=O(1)\,\ell^{-1-\alpha}.$
\end{lemma}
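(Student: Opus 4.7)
The plan is to exploit orthogonality of eigenspaces of $-\Delta$ on $S$. Since the degrees $\ell_i = \ell + 4\,i$ are pairwise distinct for different $i$, the functions $v_{\ell_i} \in \Lambda_{\ell_i}$ form an $L^2$-orthogonal system, and the same is true for their gradients: for $i \neq j$, integration by parts together with $-\Delta v_{\ell_j} = \ell_j(\ell_j + 1)\,v_{\ell_j}$ yields
$$
\int_S \nabla v_{\ell_i} \cdot \nabla v_{\ell_j} = \ell_j(\ell_j+1) \int_S v_{\ell_i}\,v_{\ell_j} = 0.
$$
Applying these orthogonalities to $u_\ell = -\ell^{-2-\alpha} \sum_{i} v_{\ell_i}$ will give
$$
|u_\ell|_{W^{1,2}(S)}^2 = \ell^{-4-2\alpha} \sum_{i=0}^{2^{-5}\,\ell} \bigl( 1 + \ell_i(\ell_i + 1) \bigr) \,\|v_{\ell_i}\|_{L^2(S)}^2.
$$

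Once this identity is in hand, it remains to insert the size of each ingredient. From the normalization of $v_{\ell_i}$ in Appendix \ref{spherical harmonics} (the zonal spherical harmonic relevant to axial symmetry, which up to sign is the Legendre polynomial $P_{\ell_i}(\cos\theta)$), one has $\|v_{\ell_i}\|_{L^2(S)}^2 = O(1)\,\ell^{-1}$; the bound $\ell \leq \ell_i \leq \tfrac{9}{8}\,\ell$ gives $\ell_i(\ell_i + 1) = O(1)\,\ell^2$; and the sum contains $O(1)\,\ell$ terms. Substituting,
$$
|u_\ell|_{W^{1,2}(S)}^2 = O(1)\,\ell^{-4-2\alpha} \cdot \ell \cdot \ell^2 \cdot \ell^{-1} = O(1)\,\ell^{-2-2\alpha},
$$
and taking the square root yields the claim. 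The purely $L^2$ contribution is $O(1)\,\ell^{-4-2\alpha}$ and is absorbed into the gradient term.

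I do not foresee any substantive obstacle for this particular lemma: it is a diagonal orthogonality computation whose only role is to verify that the prefactor $\ell^{-2-\alpha}$ exactly absorbs the three scales present in the construction, namely the $O(\ell)$ frequencies in the sum, the Laplace eigenvalue $O(\ell^2)$, and the $L^2$-size $O(\ell^{-1})$ of each zonal harmonic. The genuine difficulties of the paper lie elsewhere, in the higher-order Sobolev estimates of Lemma \ref{estimates w2p} and especially in the uniform lower bound for the cubic integrals $\int_S v_{\ell_{i_1}}\,v_{\ell_{i_2}}\,v_{\ell_{i_3}}$ in Lemma \ref{cubic term}, which is what forces the careful choice of frequencies $\ell_i \equiv 0 \bmod 4$ and the use of quantitative Wigner $3j$-symbol estimates.
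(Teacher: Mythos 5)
Your proof is correct and matches the paper's approach: the paper's one-line proof invokes Lemma \ref{inner products}, which encodes exactly the $L^2$-orthogonality of the $v_{\ell_i}$ and their gradients together with the normalization $\|v_{\ell_i}\|_{L^2(S)}^2 = 4\pi/(2\ell_i+1) = O(\ell^{-1})$ that you use. Your bookkeeping of the three scales, namely $O(\ell)$ terms, eigenvalue $O(\ell^2)$, and $L^2$-size $O(\ell^{-1})$, reproduces the paper's computation exactly.
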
 
\begin{proof} 
By Lemma \ref{inner products}, as $\ell\to\infty$,
$$
\int_S |\nabla u_\ell|^2=O(1)\,\ell^{-2-2\,\alpha}
$$
and 
$$
\int_S u_\ell^2=O(1)\,\ell^{-4-2\,\alpha}.
$$
\end{proof} 
\begin{lemma} There holds $|u_\ell|_{C^{1}(S)}=O(1)\,\ell^{-\alpha}$. \label{c1 est}
\end{lemma}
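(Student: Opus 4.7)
The plan is to establish this bound by direct termwise estimation, applying the triangle inequality to the finite sum defining $u_\ell$ together with sharp pointwise bounds on each zonal spherical harmonic $v_{\ell_i}$.

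First I would recall from Appendix \ref{spherical harmonics} the normalization of $v_{\ell_i}$. The scaling extracted in the proof of Lemma \ref{estimates}, namely $\int_S v_{\ell_i}^2 = O(\ell_i^{-1})$ (so that summing the $2^{-5}\ell + 1 = O(\ell)$ orthogonal terms yields $\int_S u_\ell^2 = O(\ell^{-4-2\alpha})$), fixes $v_{\ell_i}$ up to a harmless constant as a zonal Legendre-type harmonic saturating the standard bound $|v|_{L^\infty(S)} \lesssim \sqrt{\ell_i}\,|v|_{L^2(S)}$. Consequently one has the pointwise bounds
\[
|v_{\ell_i}|_{C^0(S)} = O(1) \qquad\text{and}\qquad |\nabla v_{\ell_i}|_{C^0(S)} = O(\ell_i) = O(\ell),
\]
the second following from the general fact that covariant differentiation enlarges the $C^0$-norm of a degree-$\ell_i$ spherical harmonic by at most a factor of order $\ell_i$ (equivalently, $\Delta v_{\ell_i} = -\ell_i(\ell_i+1)\, v_{\ell_i}$ together with Bernstein-type estimates for spherical harmonics).

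Second, I would apply the triangle inequality to
\[
u_\ell = -\ell^{-2-\alpha}\sum_{i=0}^{2^{-5}\ell} v_{\ell_i}
\]
and to its gradient, using that there are $O(\ell)$ terms in the sum. This yields
\[
|u_\ell|_{C^0(S)} \leq \ell^{-2-\alpha} \sum_{i=0}^{2^{-5}\ell} |v_{\ell_i}|_{C^0(S)} = O(\ell^{-2-\alpha})\cdot O(\ell) = O(\ell^{-1-\alpha}),
\]
and
\[
|\nabla u_\ell|_{C^0(S)} \leq \ell^{-2-\alpha} \sum_{i=0}^{2^{-5}\ell} |\nabla v_{\ell_i}|_{C^0(S)} = O(\ell^{-2-\alpha})\cdot O(\ell)\cdot O(\ell) = O(\ell^{-\alpha}).
\]
Adding these two bounds gives $|u_\ell|_{C^1(S)} = O(\ell^{-\alpha})$ as claimed.

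I do not anticipate any real obstacle: unlike the $W^{2,p}$- and cubic-integral estimates stated as later lemmas, where orthogonality or Wigner $3j$-symbol cancellation is essential, the $C^1$-bound tolerates the crude triangle inequality because the prefactor $\ell^{-2-\alpha}$ already absorbs one power of $\ell$ from the number of summands and one power of $\ell$ from differentiating a degree-$\ell$ harmonic. The only subtle step is invoking the correct $L^\infty$-bound on each $v_{\ell_i}$ with the normalization of Appendix \ref{spherical harmonics}, and this is a standard consequence of $v_{\ell_i}$ being zonal.
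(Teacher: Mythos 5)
Your proof is correct and follows the same route as the paper: apply the triangle inequality to the $O(\ell)$ summands in $u_\ell$, use the pointwise bounds $|v_{\ell_i}|_{C^0(S)}=O(1)$ and $|\nabla v_{\ell_i}|_{C^0(S)}=O(\ell_i)=O(\ell)$, and let the prefactor $\ell^{-2-\alpha}$ absorb the two extra powers of $\ell$. The paper obtains those pointwise bounds via Lemmas \ref{pointwise estimate 0} and \ref{pointwise estimate 1} (Riesz-representation reproducing-kernel arguments on $\Lambda_{\ell_i}$), whereas you invoke the equivalent zonal $L^\infty$ estimate and a Bernstein-type gradient bound; this is the same content in slightly different packaging, though the parenthetical appeal to the eigenvalue equation $\Delta v_{\ell_i}=-\ell_i(\ell_i+1)v_{\ell_i}$ alone does not by itself yield the gradient bound and should really just cite the Bernstein inequality directly.
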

\begin{proof}
	Using Lemma \ref{pointwise estimate 0} and Lemma \ref{pointwise estimate 1}, we obtain
	$$
	|u_\ell|_{C^{1}(S)}=O(1)\,\ell^{-2-\alpha}\sum_{i=0}^{2^{-5}\,\ell}|v_{\ell_i}|_{C^1(S)}=O(1)\,\ell^{-2-\alpha}\sum_{i=0}^{2^{-5}\,\ell}\ell_i=O(1)\,\ell^{-\alpha}.
	$$
\end{proof} 
\begin{lemma} \label{estimates w2p}
	Let $m\geq 2$ be an even integer.	There holds $|u_\ell|_{W^{2,m}(S)}=O(\ell^{1-\alpha-2/m}).$
\end{lemma}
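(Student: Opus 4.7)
My plan is to interpolate the $L^m$-norm of $\nabla^2 u_\ell$ between its $L^2$- and $L^\infty$-norms via the H\"older-type inequality
$$
\int_S |\nabla^2 u_\ell|^m \leq |\nabla^2 u_\ell|_{L^\infty(S)}^{m-2}\,\int_S |\nabla^2 u_\ell|^2.
$$
It therefore suffices to establish $|\nabla^2 u_\ell|_{L^2(S)} = O(\ell^{-\alpha})$ and $|\nabla^2 u_\ell|_{L^\infty(S)} = O(\ell^{1-\alpha})$; inserting both gives $\int_S |\nabla^2 u_\ell|^m = O(\ell^{m-2-m\,\alpha})$, hence $|\nabla^2 u_\ell|_{L^m(S)} = O(\ell^{1-\alpha-2/m})$. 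The contributions of $|u_\ell|_{L^m(S)}$ and $|\nabla u_\ell|_{L^m(S)}$ to $|u_\ell|_{W^{2,m}(S)}$ are controlled by $|u_\ell|_{C^1(S)}=O(\ell^{-\alpha})$ from Lemma \ref{c1 est} and are of lower order for $m \geq 2$.

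\textbf{$L^2$ bound.} The functions $v_{\ell_i}$ are eigenfunctions of $\Delta$ with distinct eigenvalues $-\ell_i(\ell_i+1)$, hence are mutually $L^2$-orthogonal, and a short integration-by-parts computation based on the Bochner formula on $S$ shows that their Hessians are $L^2$-orthogonal as well. Using
$$
\int_S |\nabla^2 v|^2 = \int_S (\Delta v)^2 - \int_S |\nabla v|^2
$$
together with the normalization $\int_S v_{\ell_i}^2 = O(\ell_i^{-1})$ provided by Lemma \ref{inner products}, I obtain $\int_S |\nabla^2 v_{\ell_i}|^2 = O(\ell_i^3)$, and summing,
$$
\int_S |\nabla^2 u_\ell|^2 = \ell^{-4-2\,\alpha}\,\sum_{i=0}^{2^{-5}\,\ell} \int_S |\nabla^2 v_{\ell_i}|^2 = O(\ell^{-4-2\,\alpha})\cdot O(\ell^4) = O(\ell^{-2\,\alpha}).
$$

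\textbf{$L^\infty$ bound and main obstacle.} For the pointwise bound I would use the second-order analogue of the pointwise estimates on $v_{\ell_i}$ and $\nabla v_{\ell_i}$ that drive the proof of Lemma \ref{c1 est}, namely $|\nabla^2 v_{\ell_i}|_{L^\infty(S)} = O(\ell_i^2)$. Summing just as in Lemma \ref{c1 est},
$$
|\nabla^2 u_\ell|_{L^\infty(S)} \leq \ell^{-2-\alpha}\sum_{i=0}^{2^{-5}\,\ell}|\nabla^2 v_{\ell_i}|_{L^\infty(S)} = O(\ell^{-2-\alpha})\cdot O(\ell^3) = O(\ell^{1-\alpha}).
$$
Establishing this sharp pointwise second-derivative estimate is the main obstacle. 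One natural route is to extend $v_{\ell_i}$ to a harmonic homogeneous polynomial on $\mathbb{R}^3$, relate the sphere Hessian to the Euclidean Hessian of this extension restricted to $S$, and apply the Bernstein-type inequality $|\nabla_S^k v|_{L^\infty(S)} = O(\ell_i^k)\,|v|_{L^\infty(S)}$ for spherical harmonics of degree $\ell_i$.
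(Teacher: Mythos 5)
Your argument is correct, but it takes a genuinely different route from the paper. The paper first reduces $|u_\ell|_{W^{2,m}(S)}$ to $\int_S(\Delta u_\ell)^m$ by elliptic regularity, then treats $m=2$ directly and, for $m\geq4$, expands the $m$-fold product $\int_S\prod_j \Delta v_{\ell_{i_j}}$ using Lemma \ref{m product} (Wigner $3j$-symbols and generalized Gaunt coefficients) and controls the resulting $m$-fold sum with Lemma \ref{summation formula 2}. You instead interpolate $\|\nabla^2 u_\ell\|_{L^m}$ between $L^2$ and $L^\infty$, which sidesteps the $3j$-symbol combinatorics entirely for this lemma. The bookkeeping is right: the $L^2$-orthogonality of the Hessians follows from the polarized Bochner identity $\int_S\langle\nabla^2 u,\nabla^2 v\rangle=\int_S\Delta u\,\Delta v-\int_S\nabla u\cdot\nabla v$ on $S^2$, the $L^2$ bound $O(\ell^{-\alpha})$ and the interpolation exponent both check out, and the lower-order terms are indeed dominated once $m\geq2$.

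The one thing you flag as an obstacle --- the pointwise bound $|\nabla^2 v_{\ell_i}|_{C^0(S)}=O(\ell_i^2)$ --- in fact follows by the exact same Riesz-representation argument as Lemmas \ref{pointwise estimate 0} and \ref{pointwise estimate 1}: for an orthonormal basis $\{f_i\}_{i=1}^{2\ell+1}$ of $\Lambda_\ell$, the $SO(3)$-invariant quantity $\sum_i|\nabla^2 f_i(x)|^2$ is independent of $x$, so integrating gives $4\pi\sum_i|\nabla^2 f_i(x)|^2=(2\ell+1)\big(\ell^2(\ell+1)^2-\ell(\ell+1)\big)=O(\ell^5)$, hence $|\nabla^2 f|_{C^0(S)}=O(\ell^{5/2})|f|_{L^2(S)}$, and combining with $|v_{\ell_i}|_{L^2(S)}=O(\ell_i^{-1/2})$ from Lemma \ref{inner products} yields the claim. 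This is the route you allude to in your first sentence on the $L^\infty$ bound; it is cleaner than the harmonic-extension/Bernstein sketch, which as phrased is nearly circular. With that lemma added, your proof is complete and arguably more elementary than the paper's, at the cost of establishing a new $C^0$ estimate for $\nabla^2 v_\ell$ that the paper's approach does not need.
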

\begin{proof}
	Since $u_\ell$ has zero mean,  
	$$
	|u_\ell|^m_{W^{2,m}(S)}=O(1)\,\int_{S}(\Delta u_\ell)^m
	$$
by elliptic regularity;	see, e.g.,~\cite[Theorem 9.11]{GilbargTrudinger}.\\
\indent Assume that $m=2$. By Lemma \ref{inner products}, 
$$
\int_S (\Delta u_\ell)^2= \ell^{-4-2\,\alpha}\,\sum_{i=0}^{2^{-5}\,\ell} \int_S (\Delta v_{\ell_i})^2 =O(1)\,\ell^{-4-2\,\alpha}\,\sum_{i=0}^{2^{-5}\,\ell}\ell_i^3=O(1)\,\ell^{-2\,\alpha}. 
$$
\indent Assume that $m\geq 4$. 	Using that $v_{\ell_i}\in \Lambda_{\ell_i}$, we obtain that 
	$$
	\int_S (\Delta u_\ell)^m=\ell^{-2\,m-\alpha\,m}\,\sum_{i_1,\ldots, i_m=0}^{2^{-5}\,\ell}\,\int_S\,\prod_{j=1}^m \Delta v_{\ell_{i_j}}=\ell^{-\alpha\,m}\,\sum_{i_1,\ldots,i_m=0}^{2^{-5}\,\ell}\,O(1)\,\bigg|\int_S\,\prod_{j=1}^m v_{\ell_{i_j}}\bigg|.
	$$ 
Recall  the definition of $\Gamma(\ell_{i_1},\ldots, \ell_{i_{m-1}})$ from \eqref{gamma coefficient}. 	By Lemma \ref{m product},
	\begin{align*}
		\int_S\,\prod_{j=1}^m v_{\ell_{i_j}}&=\sum_{ z\in\Gamma(\ell_{i_1},\ldots, \ell_{i_{m-1}})} \delta_{z_{{m-1}}\ell_{i_m}} \,(2\,\ell_{i_m}+1)^{-1}
		\,\prod_{j=1}^{m-2}(2\,z_{j+1}+1)\begin{pmatrix} z_{{j}} & \ell_{i_{j+1}} & z_{j+1} \\
			0 & 0 &0 \end{pmatrix}^2
		\\&=O(1)\,\ell^{-1}\,\sum_{ z\in\Gamma(\ell_{i_1},\ldots, \ell_{i_{m-1}})} \,\delta_{z_{{m-1}}\,\ell_{i_m}}
		\,\prod_{j=1}^{m-2}(2\,z_{j+1}+1)\begin{pmatrix} z_{{j}} & \ell_{i_{j+1}} & z_{j+1}  \\
			0 & 0 &0 \end{pmatrix}^2.
	\end{align*} 
Clearly,
	\begin{align*}
		&\,\sum_{i_m=0}^{2^{-5}\,\ell}\,\sum_{ z\in\Gamma(\ell_{i_1},\ldots, \ell_{i_{m-1}})} \,\delta_{z_{{m-1}}\,\ell_{i_m}}
		\,\prod_{j=1}^{m-2}(2\,z_{j+1}+1)\begin{pmatrix} z_{{j}} & \ell_{i_{j+1}} & z_{j+1}  \\
			0 & 0 &0 \end{pmatrix}^2		
		\\&\qquad\leq \sum_{ z\in\Gamma(\ell_{i_1},\ldots, \ell_{i_{m-1}})} 
		\,\prod_{j=1}^{m-2}(2\,z_{j+1}+1)\begin{pmatrix} z_{{j}} & \ell_{i_{j+1}} & z_{j+1} \\
			0 & 0 &0 \end{pmatrix}^2.
	\end{align*} 
	Applying Lemma \ref{summation formula 2}, we see that 
	$$
	\sum_{z\in\Gamma(\ell_{i_1},\ldots, \ell_{i_{m-1}})} 
	\,\prod_{j=1}^{m-2}(2\,z_{j+1}+1)\begin{pmatrix} z_{{j}} & \ell_{i_{j+1}} & z_{j+1} \\
		0 & 0 &0 \end{pmatrix}^2=O(1).
	$$
	The assertion follows, using that 
	$$
	\sum_{i_1,\ldots,i_{m-1}=0}^{2^{-5}\,\ell}1=O(1)\,\ell^{m-1}.
	$$
\end{proof} 

\begin{lemma}
	There holds \label{cubic term}
	$$
	\liminf_{\ell\to\infty} \ell^{1+3\,\alpha}\,\int_S|\nabla u_\ell|^2\,\Delta u_\ell>0.
	$$
\end{lemma}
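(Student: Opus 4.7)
The plan is to expand $\int_S|\nabla u_\ell|^2\,\Delta u_\ell$ into a triple sum over zonal harmonics, use an integration-by-parts identity to rewrite each summand as a triple product integral weighted by eigenvalues, and then verify that every factor has a uniform sign and the expected order of magnitude.

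Writing $\lambda_i=\ell_i(\ell_i+1)$ and using $\Delta v_{\ell_i}=-\lambda_i v_{\ell_i}$, the definition of $u_\ell$ gives
\[
\int_S|\nabla u_\ell|^2\,\Delta u_\ell=-\ell^{-6-3\,\alpha}\sum_{i,j,k=0}^{2^{-5}\,\ell}\lambda_k\int_S\nabla v_{\ell_i}\cdot\nabla v_{\ell_j}\,v_{\ell_k}.
\]
The identity $2\,\nabla v_{\ell_i}\cdot\nabla v_{\ell_j}=\Delta(v_{\ell_i}\,v_{\ell_j})-v_{\ell_i}\,\Delta v_{\ell_j}-v_{\ell_j}\,\Delta v_{\ell_i}$, combined with the self-adjointness of $\Delta$ tested against $v_{\ell_k}$, yields
\[
\int_S\nabla v_{\ell_i}\cdot\nabla v_{\ell_j}\,v_{\ell_k}=\tfrac{1}{2}\,(\lambda_i+\lambda_j-\lambda_k)\,\int_S v_{\ell_i}\,v_{\ell_j}\,v_{\ell_k},
\]
so that
\[
\int_S|\nabla u_\ell|^2\,\Delta u_\ell=\tfrac{1}{2}\,\ell^{-6-3\,\alpha}\sum_{i,j,k=0}^{2^{-5}\,\ell}\lambda_k\,(\lambda_i+\lambda_j-\lambda_k)\int_S v_{\ell_i}\,v_{\ell_j}\,v_{\ell_k}.
\]

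Next, every factor in this sum has a definite sign of known order. Since $\ell\leq\ell_i\leq\tfrac{9}{8}\,\ell$ for every admissible $i$, the inequality $2\,\ell^2>\tfrac{81}{64}\,\ell^2$ forces
\[
\lambda_i+\lambda_j-\lambda_k\geq\tfrac{47}{64}\,\ell^2-O(\ell)\geq\tfrac{1}{2}\,\ell^2
\]
for $\ell$ sufficiently large, and $\lambda_k\geq\ell^2$, so that $\lambda_k\,(\lambda_i+\lambda_j-\lambda_k)\geq\tfrac{1}{2}\,\ell^4$. The crucial quantitative input, to be supplied by the 3j-symbol estimates of Appendix \ref{3j appendix}, is the uniform lower bound
\[
\int_S v_{\ell_i}\,v_{\ell_j}\,v_{\ell_k}\geq c\,\ell^{-2}
\]
for some $c>0$, valid for all $0\leq i,j,k\leq 2^{-5}\,\ell$ once $\ell$ is large. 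Since the triple sum contains $\sim\ell^3$ nonnegative terms, combining these estimates gives
\[
\int_S|\nabla u_\ell|^2\,\Delta u_\ell\geq C\,\ell^{-6-3\,\alpha}\cdot\ell^4\cdot\ell^{-2}\cdot\ell^3=C\,\ell^{-1-3\,\alpha}
\]
for some $C>0$, from which the liminf claim is immediate.

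The only genuinely hard step is the uniform lower bound $\int_S v_{\ell_i}\,v_{\ell_j}\,v_{\ell_k}\geq c\,\ell^{-2}$; everything else reduces to the integration-by-parts identity above and coarse eigenvalue bookkeeping. The uniformity is a non-cancellation statement for the Wigner 3j-symbol $\begin{pmatrix}\ell_i&\ell_j&\ell_k\\0&0&0\end{pmatrix}^2$ across the entire dyadic range of admissible triples, which is precisely what motivates the specific choice $\ell_i=\ell+4\,i$ and what is established by the semiclassical estimates in the appendix.
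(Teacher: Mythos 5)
Your proposal is correct and takes essentially the same route as the paper's proof: expand the cubic functional into a triple sum over zonal harmonics, use the integration-by-parts identity $\int_S \nabla v_{\ell_i}\cdot\nabla v_{\ell_j}\, v_{\ell_k} = \tfrac12(\lambda_i+\lambda_j-\lambda_k)\int_S v_{\ell_i}v_{\ell_j}v_{\ell_k}$ (this is Lemma~\ref{triple product 1}), observe that the dyadic pinching $\ell\le\ell_i\le\tfrac98\ell$ makes the eigenvalue weight $\ge\ell^4/2$, and invoke the uniform lower bound on the Wigner 3j-symbol (Lemma~\ref{3j lower bound} combined with Lemma~\ref{m product}, giving $\int_S v_{\ell_i}v_{\ell_j}v_{\ell_k}=4\pi\begin{pmatrix}\ell_i&\ell_j&\ell_k\\0&0&0\end{pmatrix}^2\gtrsim\ell^{-2}$, valid because $\ell_i\equiv0\bmod4$ forces the even-parity branch and keeps the triangle condition nontrivially satisfied). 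One very small typo: the intermediate display $\int_S|\nabla u_\ell|^2\,\Delta u_\ell=-\ell^{-6-3\alpha}\sum\lambda_k\int_S\nabla v_{\ell_i}\cdot\nabla v_{\ell_j}\,v_{\ell_k}$ should carry a $+$ sign (since $\Delta u_\ell=+\ell^{-2-\alpha}\sum_k\lambda_k v_{\ell_k}$); your subsequent formula with $+\tfrac12$ is nevertheless correct and matches the paper, so this does not affect the argument.
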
 
\begin{proof} 
Using  Lemma \ref{triple product 1} and Lemma \ref{m product}, we have 
\begin{align*} 
\frac{1}{4\,\pi}\,\int_S |\nabla u_\ell|^2\,\Delta u_\ell =\frac12\,\ell^{-6-3\,\alpha}\,\sum_{i_1,\,i_2,\,i_3=0}^{2^{-5}\,\ell}\,\ell_{i_1}\,(\ell_{i_1}+1)&\,\big[\ell_{i_2}\,(\ell_{i_2}+1)+\ell_{i_3}\,(\ell_{i_3}+1)-\ell_{i_1}\,(\ell_{i_1}+1)\big]\\&\,\times \begin{pmatrix} \ell_{i_1} & \ell_{i_2} &\ell_{i_3} \\
	0& 0 & 0
\end{pmatrix}^2.
\end{align*} 
Note that
$$
\ell_{i_2}\,(\ell_{i_2}+1)+\ell_{i_3}\,(\ell_{i_3}+1)-\ell_{i_1}\,(\ell_{i_1}+1)\geq \ell^2/2 
$$
so that 
$$
\ell_{i_1}\,(\ell_{i_1}+1)\,\big[\ell_{i_2}\,(\ell_{i_2}+1)+\ell_{i_3}\,(\ell_{i_3}+1)-\ell_{i_1}\,(\ell_{i_1}+1)\big]\geq \ell^4/2.
$$
In conjunction with Lemma \ref{3j lower bound},  as $\ell\to\infty$,
$$
\int_S |\nabla u_\ell|^2\,\Delta u_\ell\geq  \left(2^{-18}+o(1)\right)\,\ell^{-1-3\,\alpha}.
$$ 
\end{proof} 
\begin{proof}[Proof of Theorem \ref{main result}]
	Using Lemma \ref{expansion}, Lemma \ref{estimates}, Lemma \ref{c1 est},  Lemma \ref{estimates w2p}, and integration by parts, we see that
	$$
	\cM(\Sigma(u_\ell))=-\frac12\,\int_S |\nabla u_\ell|^2\,\Delta u_\ell+O(1)\,\ell^{-2-2\,\alpha}+O(1)\,\ell^{-1-4\,\alpha}.
	$$
	By Lemma \ref{cubic term}, the right-hand side is negative for all $\ell$   sufficiently large. 
Let $m\geq 2$ be an even integer. By Lemma \ref{estimates w2p},
	$
	|u_\ell|_{W^{2,m}(S)}\to 0
	$ provided that $\alpha>1-2/m$. Using a diagonal argument, the assertion follows.
\end{proof} 
\begin{appendices}

\section{Spherical harmonics and Legendre polynomials}\label{spherical harmonics}
Recall that the eigenvalues of the operator 
	$$-\Delta:H^{2}(S)\to L^2(S)$$
	are given by
	$
	\ell\,(\ell+1)
	$	where $\ell\geq0$ is an integer. We let $\Lambda_\ell$ denote the eigenspace corresponding to the eigenvalue $\ell\,(\ell+1)$ and recall that $\dim\,\Lambda_\ell=2\,\ell+1$.  \\ \indent The Legendre polynomials $P_\ell$ may be defined via a generating function as follows. Given $s\in[0,1]$ and $t\in[0,1)$, there holds
\begin{align*}
	(1-2\,s\,t+t^2)^{-\frac12}=\sum_{\ell=0}^\infty P_\ell(s)\,t^\ell. %\label{legendre generating2}
\end{align*} 
We define $v_\ell\in C^\infty(S)$ by $$v_\ell(x^1,x^2,x^3)=P_\ell(-x^3). $$ Recall from, e.g.,~\cite[\S8]{CourantHilbert} that $v_\ell\in \Lambda_\ell$, namely, $-\Delta v_{\ell}=\ell\,(\ell+1)\,v_\ell$. 
\begin{lemma}[{\cite[\S8]{CourantHilbert}}] \label{inner products}
Let $\ell_1,\ell_2\geq 0$ be integers. The following identities  hold.
\begin{align*} 
	 		\frac{1}{4\,\pi}\,\int_S v_{\ell_1}\,v_{\ell_2}&=\frac{1}{2\,\ell_1+1}\,\delta_{\ell_1\ell_2}\qquad\qquad\qquad\qquad\qquad\qquad\qquad\\ 
	\frac{1}{4\,\pi}\,\int_S \nabla v_{\ell_1}\cdot \nabla v_{\ell_2}&=\frac{\ell_1\,(\ell_1+1)}{2\,\ell_1+1}\,\delta_{\ell_1\ell_2}\\ 
	\frac{1}{4\,\pi}\,\int_S \Delta v_{\ell_1}\,\Delta v_{\ell_2}&=\frac{\ell_1^2\,(\ell_1+1)^2}{2\,\ell_1+1}\,\delta_{\ell_1\ell_2}
\end{align*} 

\end{lemma}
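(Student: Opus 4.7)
The plan is to derive the first identity from the generating function \eqref{legendre generating2} by exploiting the axial symmetry of $v_\ell$, and then to deduce the second and third identities by integration by parts together with the eigenvalue equation $-\Delta v_\ell = \ell(\ell+1)\,v_\ell$ recalled in the text.

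For the first identity, I would parametrize $S$ by latitude $\theta$ and longitude $\phi$, setting $s = -x^3 = \cos\theta$ and $d\sigma = \sin\theta\,d\theta\,d\phi$. Since $v_\ell = P_\ell(s)$ is independent of $\phi$,
$$
\int_S v_{\ell_1}\,v_{\ell_2} = 2\,\pi\,\int_{-1}^1 P_{\ell_1}(s)\,P_{\ell_2}(s)\,ds.
$$
To evaluate this, square the generating function \eqref{legendre generating2} and integrate term by term: for $t \in [0,1)$,
$$
\int_{-1}^1 (1 - 2\,s\,t + t^2)^{-1}\,ds = \sum_{\ell_1,\ell_2 \geq 0} t^{\ell_1 + \ell_2}\,\int_{-1}^1 P_{\ell_1}(s)\,P_{\ell_2}(s)\,ds.
$$
The left-hand side is elementary: it equals $t^{-1}\,\log\frac{1+t}{1-t} = 2\,\sum_{n \geq 0} t^{2n}/(2\,n+1)$. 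Matching coefficients in the (convergent) power series yields $\int_{-1}^1 P_{\ell_1}\,P_{\ell_2}\,ds = \frac{2}{2\,\ell_1+1}\,\delta_{\ell_1 \ell_2}$, and multiplication by $2\,\pi$ gives the first identity.

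For the second identity, integration by parts on $S$ combined with $-\Delta v_{\ell_2} = \ell_2(\ell_2+1)\,v_{\ell_2}$ gives
$$
\int_S \nabla v_{\ell_1} \cdot \nabla v_{\ell_2} = -\int_S v_{\ell_1}\,\Delta v_{\ell_2} = \ell_2\,(\ell_2+1)\,\int_S v_{\ell_1}\,v_{\ell_2},
$$
and the first identity finishes the computation (the Kronecker factor forces $\ell_2 = \ell_1$). For the third identity, applying the eigenvalue equation to both factors,
$$
\int_S \Delta v_{\ell_1}\,\Delta v_{\ell_2} = \ell_1\,(\ell_1+1)\,\ell_2\,(\ell_2+1)\,\int_S v_{\ell_1}\,v_{\ell_2},
$$
and the first identity concludes the proof.

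The only nontrivial step is the Legendre normalization constant $2/(2\,\ell+1)$; everything else is formal manipulation with the eigenvalue equation. The orthogonality part (case $\ell_1 \neq \ell_2$) could alternatively be obtained abstractly from self-adjointness of $-\Delta$, but the generating function approach above delivers orthogonality and normalization in a single step, which I expect to be the cleanest route.
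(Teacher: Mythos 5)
The paper itself gives no proof of Lemma~\ref{inner products}; it is cited from \cite[\S8]{CourantHilbert}, so there is no in-paper argument to compare against. Your reductions for the second and third identities are correct: integration by parts on the closed surface $S$ together with the eigenvalue equation $-\Delta v_\ell = \ell\,(\ell+1)\,v_\ell$ carry both back to the first identity. The derivation of the first identity, however, has a real gap. Squaring \eqref{legendre generating2} and integrating in $s$ only lets you match the coefficient of $t^m$, which controls the aggregate
\[
\sum_{\ell_1+\ell_2=m}\int_{-1}^1 P_{\ell_1}(s)\,P_{\ell_2}(s)\,ds,
\]
not the individual inner products; for $m=2$, say, this one equation cannot distinguish $\int_{-1}^1 P_1^2\,ds$ from $\int_{-1}^1 P_0\,P_2\,ds$. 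So the step ``matching coefficients yields $\int_{-1}^1 P_{\ell_1}\,P_{\ell_2}\,ds = \tfrac{2}{2\ell_1+1}\,\delta_{\ell_1\ell_2}$'' is not justified as written, and your claim that the single-variable generating function delivers orthogonality and normalization ``in a single step'' is precisely where the argument breaks down. Two standard repairs exist: prove orthogonality for $\ell_1\neq\ell_2$ first (as you remark, this is immediate from self-adjointness of $-\Delta$ on $S$ and distinctness of the eigenvalues $\ell\,(\ell+1)$), after which the single-variable matching does give the normalization; or integrate the two-parameter product $(1-2\,s\,t+t^2)^{-1/2}\,(1-2\,s\,t'+t'^2)^{-1/2}$ over $s\in[-1,1]$, obtaining
\[
\frac{1}{\sqrt{t\,t'}}\,\log\frac{1+\sqrt{t\,t'}}{1-\sqrt{t\,t'}} \;=\; 2\,\sum_{n\geq 0}\frac{(t\,t')^n}{2\,n+1},
\]
which depends on $t\,t'$ alone and hence isolates the coefficient of $t^{\ell_1}(t')^{\ell_2}$, giving orthogonality and normalization simultaneously. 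With either repair your proof is complete.
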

\begin{lemma}  \label{triple product 1}
	Let $\ell_1,\,\ell_2,\,\ell_3\geq 0$ be integers. There holds
	$$
	\int_S \Delta v_{\ell_1}\,\nabla v_{\ell_2}\cdot \nabla v_{\ell_3}=-
	\frac12\,\ell_1\,(\ell_1+1)\,\big[\ell_2\,(\ell_2+1)+\ell_3\,(\ell_3+1)-\ell_1\,(\ell_1+1)\big]
	\int_S v_{\ell_1}\,v_{\ell_2}\,v_{\ell_3}.
	$$
\end{lemma}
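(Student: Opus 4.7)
The strategy is to rewrite the gradient product $\nabla v_{\ell_2}\cdot\nabla v_{\ell_3}$ in terms of Laplacians via the standard Leibniz identity and then exploit that each $v_{\ell_j}$ is an eigenfunction of $-\Delta$ with eigenvalue $\ell_j(\ell_j+1)$. Concretely, I would start from
$$
\Delta(v_{\ell_2}\,v_{\ell_3}) \;=\; v_{\ell_3}\,\Delta v_{\ell_2} \;+\; v_{\ell_2}\,\Delta v_{\ell_3} \;+\; 2\,\nabla v_{\ell_2}\cdot\nabla v_{\ell_3},
$$
which, upon solving for the gradient product and multiplying by $\Delta v_{\ell_1}$, reduces the claim to evaluating three integrals of triple products, each with one or two Laplacians distributed among the factors.

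Next, I would handle each piece using the eigenvalue identity $\Delta v_{\ell_j}=-\ell_j(\ell_j+1)\,v_{\ell_j}$. Directly, one gets
$$
\int_S \Delta v_{\ell_1}\,v_{\ell_3}\,\Delta v_{\ell_2}=\ell_1(\ell_1+1)\,\ell_2(\ell_2+1)\int_S v_{\ell_1}\,v_{\ell_2}\,v_{\ell_3},
$$
and symmetrically for the term with $\Delta v_{\ell_3}$. For the remaining integral $\int_S \Delta v_{\ell_1}\,\Delta(v_{\ell_2}\,v_{\ell_3})$, I would integrate by parts twice (noting there are no boundary terms on $S$) to move both Laplacians onto $v_{\ell_1}$, producing $\Delta^2 v_{\ell_1}=\ell_1^2(\ell_1+1)^2\,v_{\ell_1}$ and hence
$$
\int_S \Delta v_{\ell_1}\,\Delta(v_{\ell_2}\,v_{\ell_3})=\ell_1^2(\ell_1+1)^2\int_S v_{\ell_1}\,v_{\ell_2}\,v_{\ell_3}.
$$

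Combining these three evaluations, the right-hand side collapses to
$$
-\ell_1(\ell_1+1)\bigl[\ell_2(\ell_2+1)+\ell_3(\ell_3+1)-\ell_1(\ell_1+1)\bigr]\int_S v_{\ell_1}\,v_{\ell_2}\,v_{\ell_3},
$$
and dividing by the factor of $2$ coming from the Leibniz rule yields the stated formula. There is no real obstacle here: the proof is a two-line manipulation once the Leibniz identity is invoked, and the only thing to be mildly careful about is the sign bookkeeping from $\Delta v_\ell=-\ell(\ell+1)\,v_\ell$ and the absence of boundary terms when integrating by parts on the closed manifold $S$.
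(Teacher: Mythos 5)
Your proof is correct, but it takes a slightly different route from the paper's. The paper starts from a specially chosen divergence identity,
\[
\operatorname{div}\bigl(v_{\ell_1}v_{\ell_2}\nabla v_{\ell_3}+v_{\ell_1}v_{\ell_3}\nabla v_{\ell_2}-v_{\ell_2}v_{\ell_3}\nabla v_{\ell_1}\bigr)
=2\,v_{\ell_1}\nabla v_{\ell_2}\cdot\nabla v_{\ell_3}+v_{\ell_1}v_{\ell_2}\Delta v_{\ell_3}+v_{\ell_1}v_{\ell_3}\Delta v_{\ell_2}-v_{\ell_2}v_{\ell_3}\Delta v_{\ell_1},
\]
whose cross terms cancel by design; integrating kills the left side, and a single application of the eigenvalue relations gives the formula for $\int_S v_{\ell_1}\,\nabla v_{\ell_2}\cdot\nabla v_{\ell_3}$, to which one applies $\Delta v_{\ell_1}=-\ell_1(\ell_1+1)v_{\ell_1}$ once more. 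You instead isolate $\nabla v_{\ell_2}\cdot\nabla v_{\ell_3}$ from the Leibniz rule for $\Delta$, multiply by $\Delta v_{\ell_1}$ before integrating, and dispose of the term $\int_S \Delta v_{\ell_1}\,\Delta(v_{\ell_2}v_{\ell_3})$ via self-adjointness of $\Delta$ on the closed surface and the bilaplacian eigenvalue $\Delta^2 v_{\ell_1}=\ell_1^2(\ell_1+1)^2\,v_{\ell_1}$. The paper's choice of divergence sidesteps the need for the extra integration by parts and avoids $\Delta^2$, while yours is perhaps the more automatic manipulation to write down; both are short, the bookkeeping in both is the same, and the final cancellation is identical.
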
 
\begin{proof} We have
	\begin{align*} 
&	\operatorname{div}\,(v_{\ell_1}\,v_{\ell_2}\,\nabla v_{\ell_3}+v_{\ell_1}\,v_{\ell_3}\,\nabla v_{\ell_2}-v_{\ell_2}\,v_{\ell_3}\,\nabla v_{\ell_1})\\&\qquad =2\,v_{\ell_1}\,\nabla v_{\ell_2}\cdot \nabla v_{\ell_3}+v_{\ell_1}\,v_{\ell_2}\,\Delta v_{\ell_3}+v_{\ell_1}\,v_{\ell_3}\,\Delta v_{\ell_2}-v_{\ell_2}\,v_{\ell_3}\,\Delta v_{\ell_1}.
	\end{align*} 
Integrating and using that $v_{\ell_i}\in \Lambda_{\ell_i},\,i=1,\,2,\,3$, the assertion follows.
\end{proof}
The proof of the following well-known lemmas is  based on \cite[Chapter IV, Lemma 2.8 and Corollary 2.9]{SteinWeiss}. %\cite[Proposition 6.0.1]{Garrett}. 
\begin{lemma} \label{pointwise estimate 0}
	There holds, as $\ell\to\infty$,
	$$
	|v_\ell|_{C^0(S)}=O(1).
	$$
\end{lemma}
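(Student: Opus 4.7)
The plan is to realize $v_\ell$ as a scalar multiple of the zonal spherical harmonic of degree $\ell$ with pole $-e_3$, and then to bound arbitrary zonal harmonics via the standard reproducing-kernel argument of Stein--Weiss. First, I would introduce the reproducing kernel $Z_x^{(\ell)} \in \Lambda_\ell$ at $x \in S$, characterized by
\[
Y(x) = \int_S Z_x^{(\ell)}(y)\, Y(y)\, dA(y) \qquad \text{for all } Y \in \Lambda_\ell.
\]
Expanding in an $L^2$-orthonormal basis $\{Y_{\ell, m}\}_{m=1}^{2\ell+1}$ of $\Lambda_\ell$ gives $Z_x^{(\ell)}(y) = \sum_m Y_{\ell, m}(x)\, \overline{Y_{\ell, m}(y)}$. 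The function $\sum_m |Y_{\ell, m}(x)|^2 = Z_x^{(\ell)}(x)$ is rotation-invariant, hence constant on $S$; integrating over $S$ identifies this constant as $(2\,\ell+1)/(4\,\pi)$.

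Next, applying Cauchy--Schwarz to the reproducing identity with $Y = Z_y^{(\ell)}$ and using that $\|Z_x^{(\ell)}\|_{L^2(S)}^2 = Z_x^{(\ell)}(x)$ (since $Z_x^{(\ell)} \in \Lambda_\ell$) yields
\[
\bigl|Z_x^{(\ell)}(y)\bigr|^2 \leq \bigl\|Z_x^{(\ell)}\bigr\|_{L^2(S)}^2\, \bigl\|Z_y^{(\ell)}\bigr\|_{L^2(S)}^2 = Z_x^{(\ell)}(x)\, Z_y^{(\ell)}(y) = \left(\frac{2\,\ell+1}{4\,\pi}\right)^2.
\]

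Finally, by rotation invariance $Z_{-e_3}^{(\ell)}(y)$ depends only on $\langle -e_3, y \rangle = -y^3$, so $Z_{-e_3}^{(\ell)}$ is a scalar multiple of $v_\ell$. Comparing values at $y = -e_3$ via $P_\ell(1) = 1$ fixes the scaling as $v_\ell = (4\,\pi/(2\,\ell+1))\, Z_{-e_3}^{(\ell)}$, and the pointwise bound above gives $|v_\ell|_{C^0(S)} \leq 1$, which is stronger than the claimed $O(1)$ estimate. The argument is classical and presents no substantive obstacle; the only care required is tracking the Legendre normalization $P_\ell(1) = 1$ and the resulting proportionality constant between $v_\ell$ and $Z_{-e_3}^{(\ell)}$.
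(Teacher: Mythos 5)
Your proof is correct and rests on the same reproducing-kernel computation as the paper's: the auxiliary function $F_x$ in the paper is exactly your $Z_x^{(\ell)}$, and both arguments establish $\|F_x\|_{L^2(S)}^2 = (2\,\ell+1)/(4\,\pi)$ by orthonormal-basis expansion and rotation invariance. The only difference is in the final step, where the paper applies the resulting $L^2\!\to L^\infty$ bound on $\Lambda_\ell$ together with $\|v_\ell\|_{L^2(S)}^2 = 4\,\pi/(2\,\ell+1)$ from Lemma~\ref{inner products}, whereas you identify $v_\ell = \bigl(4\,\pi/(2\,\ell+1)\bigr)\,Z_{-e_3}^{(\ell)}$ via zonality and $P_\ell(1)=1$, which yields the sharper pointwise bound $|v_\ell|\le 1$; both routes are valid.
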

\begin{proof}
	Given $x \in S$, consider the linear map $\Lambda_\ell \to \RR$ given by $f\mapsto f(x)$. By the Riesz representation theorem, for every $x\in S$, there is a unique $F_x \in \Lambda_\ell$ so that $$f(x) = \int_S f\,F_x. $$ In particular,
	\[
	|f(x)| \leq | f|_{L^2(S)}\,| F_x |_{L^2(S)}. 
	\]
	Since $SO(3)$ acts transitively on $S$, it follows that  $ | F_x|_{L^2(S)}^2$ is independent of $x$.
	
Let $\{f_i\}_{i=1}^{2\,\ell+1}$ be an orthonormal basis of $\Lambda_\ell$, we write
	\[
	F_x = \sum_{i=1}^{2\,\ell+1} \left(\int_S f_i\,F_x\right)\,f_i =  \sum_{i=1}^{2\,\ell+1} f_i(x)\, f_i
	\]
	so that
	\[
	| F_x |_{L^2(S)}^2 =  \sum_{i=1}^{2\,\ell+1} f_i(x)^2.
	\]
	Integrating with respect to $x$, we obtain
	\[
	4\,\pi\, | F_x|_{L^2(S)}^2 = 2\,\ell+1.
	\]
It follows that
	\[
	| f|_{C^0(S)} \leq \sqrt{\frac{2\,\ell+1 }{4\,\pi }}\, | f |_{L^2(S)} =O(1)\,\ell^{1/2}\,|f|_{L^2(S)}
	\]
	for all $f \in \Lambda_\ell$. \\	\indent Using also Lemma \ref{inner products}, the assertion follows. 
	\end{proof}  
	\begin{lemma} \label{pointwise estimate 1}
		There holds, as $\ell\to\infty$,
		$$
		|\nabla v_\ell|_{C^0(S)}=O(1)\,\ell.
		$$
	\end{lemma}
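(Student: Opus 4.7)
The plan is to adapt the Riesz representation argument from Lemma \ref{pointwise estimate 0} to directional derivatives. Given $x\in S$ and a unit tangent vector $v\in T_xS$, I would consider the linear functional $\Lambda_\ell\to\mathbb{R}$ given by $f\mapsto df(x)(v)$; the Riesz representation theorem produces a unique $F_{x,v}\in\Lambda_\ell$ such that $df(x)(v)=\int_S f\,F_{x,v}$ for every $f\in\Lambda_\ell$, whence
$$
|df(x)(v)|\leq |f|_{L^2(S)}\,|F_{x,v}|_{L^2(S)}.
$$
The remaining task is to bound $|F_{x,v}|_{L^2(S)}$, which will be of order $\ell^{3/2}$; applied to $f=v_\ell$ together with the estimate $|v_\ell|_{L^2(S)}=O(\ell^{-1/2})$ from Lemma \ref{inner products}, this yields the claim.

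To control $|F_{x,v}|_{L^2(S)}$, I would fix an $L^2(S)$-orthonormal basis $\{f_i\}_{i=1}^{2\,\ell+1}$ of $\Lambda_\ell$ as in the proof of Lemma \ref{pointwise estimate 0}, expand
$$
F_{x,v}=\sum_{i=1}^{2\,\ell+1}df_i(x)(v)\,f_i,
$$
and deduce that
$$
|F_{x,v}|_{L^2(S)}^2=\sum_{i=1}^{2\,\ell+1}\big(df_i(x)(v)\big)^2=:Q_x(v).
$$
The quadratic form $Q_x$ on $T_xS$ is invariant under the stabilizer $SO(2)\subset SO(3)$ of $x$, so it is isotropic: $Q_x(v)=c(x)\,|v|^2$ for some $c(x)\geq 0$. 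Taking the trace over an orthonormal basis of $T_xS$ gives $2\,c(x)=\sum_{i=1}^{2\,\ell+1}|\nabla f_i|^2(x)$, and this quantity is $SO(3)$-invariant in $x$, hence constant. Integrating over $S$ and invoking Lemma \ref{inner products} to evaluate each summand then gives
$$
8\,\pi\,c(x)=\sum_{i=1}^{2\,\ell+1}\int_S |\nabla f_i|^2=(2\,\ell+1)\,\ell\,(\ell+1),
$$
so that $|F_{x,v}|_{L^2(S)}=O(\ell^{3/2})\,|v|$.

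The main subtlety I anticipate lies in the symmetry argument identifying $Q_x$ as an isotropic form on $T_xS$; once this reduction is in place, the computation of $c(x)$ is a direct consequence of the orthogonality relations already recorded in Lemma \ref{inner products}, and the remaining steps parallel those of the proof of Lemma \ref{pointwise estimate 0}. Higher-order derivative bounds $|\nabla^{k} v_\ell|_{C^0(S)}=O(\ell^k)$, should they be needed, would follow from exactly the same argument applied to $k$-fold directional derivatives.
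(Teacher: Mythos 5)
Your proof is correct and follows essentially the same strategy as the paper's: Riesz representation for pointwise evaluation in $\Lambda_\ell$, expansion in an orthonormal basis $\{f_i\}$, and integration over $S$ to evaluate the $L^2$-norm of the representer via Lemma \ref{inner products}. The only variation is that you represent the scalar functional $f\mapsto df(x)(v)$ for each direction $v$ and then argue isotropy of the resulting quadratic form $Q_x$ via $SO(2)$-invariance before tracing, whereas the paper directly represents the vector-valued map $f\mapsto(\nabla f)(x)$ by a triple $\vec F_x\in\Lambda_\ell\times\Lambda_\ell\times\Lambda_\ell$ and bounds componentwise; your route even gives a marginally sharper constant, though this is immaterial for the $O(\ell^{3/2})$ estimate and hence for the claimed $O(\ell)$ bound on $|\nabla v_\ell|_{C^0(S)}$.
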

\begin{proof} 
	Given $x \in S$, consider the linear map $\Lambda_\ell \to \RR^3$ given by $f\mapsto (\nabla f)(x)$. By the Riesz representation theorem, there is a unique $$\vec F_x=(F_x^1,\,F_x^2,\,F_x^3) \in \Lambda_\ell\times \Lambda_\ell\times \Lambda_\ell$$ such that, for all $f\in \Lambda_\ell$,
	\[
	\nabla f(x) = \int_Sf\,\vec F_x 
	\]
As in the proof of Lemma \ref{pointwise estimate 0}, we may argue by symmetry that  $| \vec F_x|_{L^2(S)}$ is independent of $x$. 
	
	Given an orthonormal basis $\{f_i\}_{i=1}^{2\,\ell+1}$ of $\Lambda_\ell,$ we write
	\[
	\vec F_x = \sum_{i=1}^{2\,\ell+1}   \left(\int_S f_i\,\vec F_x\right)\,f_i = \sum_{i=1}^{2\,\ell+1} (\nabla f_i)(x)\, f_i.
	\]
	Note that 
	\[
	| \vec F_x|_{L^2(S)}^2 = \sum_{i=1}^{2\,\ell+1} | \nabla f_i(x)|^2 
	\]
	Integrating with respect to $x$, we obtain that
	\[
	4\,\pi \,|\vec F_x|_{L^2(S)}^2 = \sum_{i=1}^{2\,\ell+1} | \nabla f_i|_{L^2(S)}^2 = \ell\,(\ell+1) (2\,\ell+1).  
	\]
	Thus,
	\[
	| \nabla f|_{C^0(S)} \leq \sqrt{\frac{\ell\,(\ell+1)\,(2\,\ell+1)}{4\,\pi}}\, | f |_{L^2(S)} =O(1)\,\ell^{3/2} \,| f |_{L^2(S)} 
	\]
	for every $f \in \Lambda_\ell$. \\
	\indent In conjunction with Lemma \ref{inner products}, the assertion follows. 
	\end{proof} 
\section{The $3j$-symbol} \label{3j appendix}
Recall from Appendix \ref{spherical harmonics} the definition of $v_{\ell}\in C^\infty(S)$ where $\ell\geq0$ is an integer. \\ \indent 
Given two integers $\ell_1,\,\ell_2\geq0$, let 
$$
\Gamma_{\ell_1\ell_2}=\{\ell\in \mathbb{Z}:|\ell_1-\ell_2|\leq  \ell\leq  \ell_1+\ell_2\};
$$
cp.~\cite[\S3.7]{Edmonds2}. Note that $\ell\in \Gamma_{\ell_1\ell_2}$ if and only if it is possible to form a possibly degenerate triangle with side lengths $\ell_1,$ $\ell_2$, and $\ell$. Given $\ell_3\in \Gamma_{\ell_1\ell_2}$, recall from, e.g.,~\cite[\S3.7]{Edmonds2}, E.~Wigner's $3j$-symbol given by
\begin{align*} %\label{wigner odd}
	\begin{pmatrix} \ell_1 & \ell_2 &\ell_3 \\
		0& 0 & 0
	\end{pmatrix}=0
\end{align*}
if  $\ell_1+\ell_2+\ell_3$ is odd and
\begin{equation} \label{wigner even}
	\begin{aligned}  
		\begin{pmatrix} \ell_1 & \ell_2 &\ell_3 \\
			0& 0 & 0
		\end{pmatrix}&=(-1)^{(\ell_1+\ell_2+\ell_3)/2}\,\sqrt{\frac{(\ell_1+\ell_2-\ell_3)!\,(\ell_1+\ell_3-\ell_2)!\,(\ell_2+\ell_3-\ell_1)!}{(\ell_1+\ell_2+\ell_3+1)!}}\, \\
		&\qquad \times  \frac{((\ell_1+\ell_2+\ell_3)/2)!}{((\ell_1+\ell_2-\ell_3)/2)!\,((\ell_1+\ell_3-\ell_2)/2)!\,((\ell_2+\ell_3-\ell_1)/2)!}
	\end{aligned}
\end{equation} 
if  $\ell_1+\ell_2+\ell_3$ is even; see \cite[(3.7.17)]{Edmonds2}.
\begin{lemma}[{\cite[(3.5.7)]{Edmonds2}}%or {\cite[(4.6.5)]{Edmonds}}
	] \label{product}
	Let $\ell_1,\,\ell_2\geq 0$ be integers. There holds
	$$
	v_{\ell_1}\,v_{\ell_2}=\sum_{\ell\in \Gamma_{\ell_1\ell_2}}\,(2\,\ell+1)\begin{pmatrix} \ell_1 & \ell_2 &\ell \\
		0& 0 & 0
	\end{pmatrix}^2\,v_\ell.
	$$	
\end{lemma}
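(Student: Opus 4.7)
The plan is to expand $v_{\ell_1}\,v_{\ell_2}$ in the basis of zonal harmonics $\{v_\ell\}$ using the orthogonality relation of Lemma \ref{inner products}, and then to identify the resulting expansion coefficient with the square of the $3j$-symbol by way of Gaunt's formula for triple integrals of spherical harmonics.

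First, I would observe that $v_{\ell_1}\,v_{\ell_2}\in C^\infty(S)$ is axially symmetric (depends only on $x^3$) and restricts to a polynomial in $x^3$ of degree at most $\ell_1+\ell_2$. Since each $v_\ell$ is a polynomial in $x^3$ of degree exactly $\ell$, the family $\{v_0,\ldots,v_{\ell_1+\ell_2}\}$ is a basis of the space of such polynomials. Hence there exist unique coefficients $c_\ell\in\mathbb{R}$ with
$$v_{\ell_1}\,v_{\ell_2}=\sum_{\ell=0}^{\ell_1+\ell_2}c_\ell\,v_\ell.$$
Multiplying by $v_\ell$, integrating, and invoking the first identity of Lemma \ref{inner products} yields
$$c_\ell=(2\,\ell+1)\,(4\,\pi)^{-1}\,\int_S v_{\ell_1}\,v_{\ell_2}\,v_\ell.$$

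The main step is to evaluate this triple integral. I would pass to the usual spherical harmonic $Y_{\ell\,0}$ via
$$v_\ell(x)=(-1)^\ell\,\sqrt{4\,\pi/(2\,\ell+1)}\,Y_{\ell\,0}(x),$$
which follows from $P_\ell(-t)=(-1)^\ell\,P_\ell(t)$ together with the standard normalization $Y_{\ell\,0}(\theta)=\sqrt{(2\,\ell+1)/(4\,\pi)}\,P_\ell(\cos\theta)$, and then invoke Gaunt's classical formula
$$\int_S Y_{\ell_1\,0}\,Y_{\ell_2\,0}\,Y_{\ell\,0}=\sqrt{\frac{(2\,\ell_1+1)\,(2\,\ell_2+1)\,(2\,\ell+1)}{4\,\pi}}\,\begin{pmatrix} \ell_1 & \ell_2 & \ell \\ 0 & 0 & 0 \end{pmatrix}^2.$$
The sign $(-1)^{\ell_1+\ell_2+\ell}$ introduced by the change of normalization is harmless because of the selection rule \eqref{wigner odd}: it equals $+1$ whenever the $3j$-symbol is nonzero. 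Substituting back gives $c_\ell=(2\,\ell+1)\,\begin{pmatrix}\ell_1 & \ell_2 & \ell \\ 0 & 0 & 0\end{pmatrix}^2$, and the restriction $\ell\in\Gamma_{\ell_1\ell_2}$ is then automatic since the $3j$-symbol vanishes outside the triangle range.

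The main obstacle is Gaunt's formula itself. The most economical route is to cite it directly from \cite[(3.7.3)]{Edmonds}, which is consistent with the referencing elsewhere in the paper. A self-contained derivation would use the Clebsch--Gordan decomposition of the $SO(3)$-representation $\Lambda_{\ell_1}\otimes\Lambda_{\ell_2}$ together with the explicit formula \eqref{wigner even}; a purely analytic alternative would proceed degree-by-degree from the generating function \eqref{legendre generating2} and the three-term recursion for Legendre polynomials, but that route is substantially longer.
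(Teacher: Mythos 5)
The paper gives no proof of this lemma: it is a direct citation of the product formula \cite[(4.6.5)]{Edmonds}. Your argument is correct and amounts to deriving that product formula from the $L^2$-orthogonality of the zonal harmonics together with Gaunt's triple-integral formula \cite[(3.7.3)]{Edmonds}, which is classically equivalent content from the same source. The computation of the normalization (the factor $(-1)^{\ell_1+\ell_2+\ell}$ coming from $v_\ell=(-1)^\ell\sqrt{4\pi/(2\ell+1)}\,Y_{\ell 0}$, and its cancellation via the parity selection rule \eqref{wigner odd}) is carried out correctly. One small point worth tightening: the paper defines the $3j$-symbol only for $\ell\in\Gamma_{\ell_1\ell_2}$, so when you say the restriction to $\Gamma_{\ell_1\ell_2}$ is ``automatic since the $3j$-symbol vanishes outside the triangle range,'' you are invoking a convention the paper does not set up. It is cleaner to note directly that $\int_S v_{\ell_1}v_{\ell_2}v_\ell=0$ for $\ell<|\ell_1-\ell_2|$: assuming $\ell_1\geq\ell_2$, the product $v_{\ell_2}v_\ell$ is a polynomial in $x^3$ of degree at most $\ell_2+\ell<\ell_1$, hence lies in the span of $v_0,\ldots,v_{\ell_2+\ell}$ and is therefore orthogonal to $v_{\ell_1}$ by Lemma \ref{inner products}. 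With that fix, this is a perfectly good proof, though it trades one Edmonds citation for another rather than being more elementary.
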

Let $m\geq 3$ be an integer. Given integers $\ell_1,\ldots,\ell_{m-1}\geq0$, we let \begin{align} \label{gamma coefficient} \Gamma(\ell_1,\ldots,\ell_{m-1})=\{(z_1,\ldots,z_{m-1})\in \mathbb{Z}^{m-1}:z_1=\ell_1\text{ and }z_i\in \Gamma_{z_{i-1}\ell_{i}} \text{ for } i=2,\ldots,m-1 \}.\end{align} 
Note that $\Gamma(\ell_1,\ell_2)=\{(\ell_1,\ell):\ell\in \Gamma_{\ell_1\ell_2}\}$.
\begin{lemma} \label{m product}
	Let $\ell_1,\,\ldots,\,\ell_{m}\geq0$ be  integers.  There holds 
	$$
	\frac{1}{4\,\pi}\,\int_S\,\prod_{i=1}^m v_{\ell_i}=\sum_{ z\in\Gamma(\ell_1,\ldots,\ell_{m-1})}\delta_{z_{m-1}\ell_m}\,(2\,\ell_m+1)^{-1}\,\prod_{i=1}^{m-2}(2\,z_{i+1}+1)\begin{pmatrix} z_{i} & \ell_{i+1} & z_{i+1} \\
		0& 0 & 0
	\end{pmatrix}^2.
	$$
	
\end{lemma}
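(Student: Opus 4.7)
The claim is an iterated multiplication identity, so the natural strategy is to apply Lemma \ref{product} repeatedly until the product $\prod_{i=1}^{m-1} v_{\ell_i}$ has been expanded as a linear combination of single Legendre polynomial functions $v_z$, and then to pair against the remaining factor $v_{\ell_m}$ using the orthogonality relation in Lemma \ref{inner products}.

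More precisely, I would first establish by induction on $n\geq 2$ the auxiliary expansion
\begin{align*}
\prod_{i=1}^{n} v_{\ell_i} = \sum_{z \in \Gamma(\ell_1,\ldots,\ell_n)} \,\prod_{i=1}^{n-1} (2\,z_{i+1}+1)\, \begin{pmatrix} z_i & \ell_{i+1} & z_{i+1} \\ 0 & 0 & 0 \end{pmatrix}^2 \, v_{z_n},
\end{align*}
with the convention $z_1 = \ell_1$. The base case $n=2$ is exactly Lemma \ref{product}, since $\Gamma(\ell_1,\ell_2) = \{(\ell_1,z):z\in\Gamma_{\ell_1\ell_2}\}$. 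For the inductive step, one multiplies the identity for $n$ by $v_{\ell_{n+1}}$ and expands each product $v_{z_n}\,v_{\ell_{n+1}}$ via Lemma \ref{product}; the new summation index ranges over $z_{n+1}\in\Gamma_{z_n\ell_{n+1}}$, which is precisely how an element of $\Gamma(\ell_1,\ldots,\ell_n)$ extends to an element of $\Gamma(\ell_1,\ldots,\ell_{n+1})$.

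To finish, I apply this expansion with $n = m-1$, multiply through by $v_{\ell_m}$, and integrate over $S$. By Lemma \ref{inner products},
\[
\frac{1}{4\,\pi}\,\int_S v_{z_{m-1}}\, v_{\ell_m} = (2\,\ell_m+1)^{-1}\,\delta_{z_{m-1}\,\ell_m},
\]
which produces exactly the Kronecker factor $\delta_{z_{m-1}\ell_m}$ and the normalization $(2\,\ell_m+1)^{-1}$ in the statement of the lemma.

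No serious analytic obstacle is anticipated, since all the nontrivial content sits in the single-product expansion of Lemma \ref{product} and in the orthogonality of Lemma \ref{inner products}; the main point to be careful about is bookkeeping. In particular, one must verify that the constraint $z_1 = \ell_1$ in the definition of $\Gamma(\ell_1,\ldots,\ell_{m-1})$ is correctly inherited at each inductive step and that the iterated triangle conditions $z_{i+1}\in\Gamma_{z_i\ell_{i+1}}$ match exactly the admissibility conditions generated by successive applications of Lemma \ref{product}.
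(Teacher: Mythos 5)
Your proof is correct and follows essentially the same route as the paper's: both argue by induction, repeatedly expanding products of Legendre polynomials via Lemma \ref{product} and then invoking the orthogonality relations in Lemma \ref{inner products} to collapse the final integral. The paper states this very tersely (via a recursive description of $\Gamma$), whereas you make the intermediate pointwise expansion of $\prod_{i=1}^{n} v_{\ell_i}$ explicit before integrating — a cleaner write-up of the same idea.
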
 
\begin{proof}
	Note that 
	$$
%	\Gamma(\ell_1,\ldots,\ell_{m-1})=\{(\ell_1,z_2c,\ldots,z_{m-1}):z_2\in \Gamma_{\ell_1\ell_2}\text{ and }(z_2,\ldots,z_{m-1})\in \Gamma(z_2,\ell_3,\ldots,\ell_{m-1})\}.
	$$
	The assertion now follows by induction, using Lemma \ref{product} and Lemma \ref{inner products}.
\end{proof}

\begin{lemma}  \label{3j lower bound} Let $\delta>0$. There is an integer $\ell>1$ with the following property. Let $\ell_1,\,\ell_2\geq0$ be integers and $\ell_3\in \Gamma_{\ell_1\ell_2}$ such that
	\begin{itemize}
		\item[$\circ$] $\ell_1+\ell_2+\ell_3=0\, \operatorname{mod}\, 4$ \text{ and}
		\item[$\circ$] $\ell_1,\,\ell_2,\,\ell_3\geq \ell.$
	\end{itemize}
	Then 
	\begin{align*} 
		(\ell_1+\ell_2+\ell_3)\begin{pmatrix} \ell_1 & \ell_2 &\ell_3 \\
			0& 0 & 0
		\end{pmatrix}\geq \sqrt{\frac{2}{\pi}}-\delta.
	\end{align*} 
\end{lemma}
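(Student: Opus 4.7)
The plan is to apply a quantitative Stirling approximation directly to the explicit formula \eqref{wigner even}. Set $s = (\ell_1+\ell_2+\ell_3)/2$; by hypothesis $s$ is an even integer, so $(-1)^s = 1$. Let $a = s - \ell_1$, $b = s - \ell_2$, and $c = s - \ell_3$; these are nonnegative integers with $a + b + c = s$. A short algebraic manipulation using the identity $(2n)! = (n!)^2 \binom{2n}{n}$ recasts \eqref{wigner even} as
\[
\begin{pmatrix} \ell_1 & \ell_2 & \ell_3 \\ 0 & 0 & 0 \end{pmatrix}^2 = \frac{1}{2s+1}\cdot\frac{\binom{2a}{a}\binom{2b}{b}\binom{2c}{c}}{\binom{2s}{s}}.
\]

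Next I would insert the quantitative asymptotic $\binom{2n}{n} = 4^n/\sqrt{\pi n}\cdot(1+O(1/n))$ for central binomial coefficients, valid with absolute constants for $n \geq 1$. Because $a+b+c = s$, the exponential factors $4^{\,\cdot\,}$ cancel, and in the regime $\min(a,b,c) \to \infty$ one obtains
\[
(\ell_1+\ell_2+\ell_3)^2\begin{pmatrix} \ell_1 & \ell_2 & \ell_3 \\ 0 & 0 & 0 \end{pmatrix}^2 = \frac{2}{\pi}\cdot\frac{s^{3/2}}{\sqrt{abc}}\cdot(1+o(1)).
\]
The inequality $s^{3/2}/\sqrt{abc} \geq 1$ then follows from the elementary bound $s = a+b+c \geq \max(a,b,c) \geq (abc)^{1/3}$, and taking a square root yields the claim in this regime.

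The main obstacle is ensuring this asymptotic is uniform across all admissible $(\ell_1, \ell_2, \ell_3)$, since Stirling cannot be applied directly to $\binom{2c}{c}$ when $c$ is small relative to $\ell$. By the symmetry of the 3j-symbol in $(\ell_1, \ell_2, \ell_3)$, we may assume $c = \min(a, b, c)$. For the boundary regime where $c \leq C$ for a fixed constant $C$, the identities $a = \ell_2 - c$ and $b = \ell_1 - c$ guarantee that $a, b \geq \ell - C$ remain large. Applying Stirling only to $\binom{2a}{a}$, $\binom{2b}{b}$, $\binom{2s}{s}$, bounding $\binom{2c}{c}/4^c$ below by a positive constant depending only on $C$, and using $\sqrt{ab} \leq (a+b)/2 \leq s/2$, one derives
\[
(\ell_1+\ell_2+\ell_3)^2\begin{pmatrix} \ell_1 & \ell_2 & \ell_3 \\ 0 & 0 & 0 \end{pmatrix}^2 \geq c_C\,\sqrt{s}\,(1 + o(1))
\]
for a constant $c_C > 0$, which diverges as $\ell \to \infty$. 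Given $\delta > 0$, I would first fix $C$ large enough so that the balanced-regime analysis yields the bound $\sqrt{2/\pi} - \delta$, and then choose $\ell$ large enough that the boundary-regime bound exceeds $\sqrt{2/\pi}$, so that the desired inequality holds uniformly.
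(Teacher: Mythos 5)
Your proof is correct and follows essentially the same approach as the paper: both arguments rewrite the explicit formula \eqref{wigner even} in a form amenable to Stirling's approximation (your central-binomial reformulation $\smash{\binom{\ell_1\,\ell_2\,\ell_3}{0\ \ 0\ \ 0}^2 = \frac{1}{2s+1}\binom{2a}{a}\binom{2b}{b}\binom{2c}{c}/\binom{2s}{s}}$ is algebraically equivalent to the paper's identity \eqref{sequence} via the sequence $a_k = \sqrt{\binom{2k}{k}}\,(2k)^{1/4}/2^k$), and both split into the regime where $\min(a,b,c)$ is bounded (yielding a bound that diverges) and the regime where $\min(a,b,c)\to\infty$ (yielding a bound converging to at least $\sqrt{2/\pi}$, using $s=a+b+c$ together with an elementary AM-GM type estimate). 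The only cosmetic differences are the parametrization and that you make the WLOG reduction $c=\min(a,b,c)$ and the quantifier bookkeeping (fix $C$, then $\ell$) explicit, while the paper states the dichotomy more informally; substantively the proofs coincide.
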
 
\begin{proof}
	We first consider the case where $\ell_1+\ell_2>\ell_3$, $\ell_1+\ell_3>\ell_2$, and $\ell_2+\ell_3>\ell_1$.
	
	Consider the sequence $\{a_k\}_{k=1}^\infty$ given by
	$$
	a_k= \frac{((2\,k)!)^{1/2}}{k!}\,\frac{(2\,k)^{1/4}}{2^k}.
	$$	
	By Stirling's formula, 
	$
	\lim_{k\to\infty}	a_k=(2/\pi)^{1/4}.
	$
	In particular, 
	\begin{align} \label{inf} 
		0<	\inf_{k\geq 1}a_k<\sup_{k\geq 1} a_k<\infty.
	\end{align}
	Using \eqref{wigner even} and that $(-1)^{(\ell_1+\ell_2+\ell_3)/2}=1$, we see that
	\begin{equation}  \label{sequence}
		\begin{aligned} 
			&\begin{pmatrix} \ell_1 & \ell_2 &\ell_3 \\
				0& 0 & 0
			\end{pmatrix} \,\bigg[(\ell_1+\ell_2-\ell_3)\,(\ell_1+\ell_3-\ell_2)\,(\ell_2+\ell_3-\ell_1)\,(\ell_1+\ell_2+\ell_3+1)\,\bigg]^{1/4} \\
			&\qquad =\frac{(\ell_1+\ell_2+\ell_3)^{1/4}}{(\ell_1+\ell_2+\ell_3+1)^{1/4}}\,\frac{a_{(\ell_1+\ell_2-\ell_3)/2}\,a_{(\ell_1+\ell_3-\ell_2)/2}\,a_{(\ell_2+\ell_3-\ell_1)/2}}{a_{(\ell_1+\ell_2+\ell_3)/2}}.
		\end{aligned}
	\end{equation} \indent 
	If, as $\ell\to\infty$, 
	$$
	\ell_1+\ell_2-\ell_3=O(1),\qquad \ell_1+\ell_3-\ell_2=O(1),\qquad\text{or}\qquad \ell_2+\ell_3-\ell_1=O(1),
	$$ then $$
	\liminf_{\ell\to\infty} (\ell_1+\ell_2+\ell_3)\,\big[(\ell_1+\ell_2-\ell_3)\,(\ell_1+\ell_3-\ell_2)\,(\ell_2+\ell_3-\ell_1)\,(\ell_1+\ell_2+\ell_3+1)\,\big]^{-1/4}=\infty.
	$$
	In conjunction with \eqref{inf} and \eqref{sequence}, we see that
	\begin{align*} 
		(\ell_1+\ell_2+\ell_3)\begin{pmatrix} \ell_1 & \ell_2 &\ell_3 \\
			0& 0 & 0
		\end{pmatrix}\to\infty.
	\end{align*} 
	\indent If, as $\ell\to\infty$, 
	$$
	\ell_1+\ell_2-\ell_3\to\infty,\qquad \ell_1+\ell_3-\ell_2\to\infty,\qquad\text{and}\qquad \ell_2+\ell_3-\ell_1\to\infty,
	$$ then 
	$$
	\liminf_{\ell\to\infty} (\ell_1+\ell_2+\ell_3)\,\big[(\ell_1+\ell_2-\ell_3)\,(\ell_1+\ell_3-\ell_2)\,(\ell_2+\ell_3-\ell_1)\,(\ell_1+\ell_2+\ell_3+1)\,\big]^{-1/4}\geq 1
	$$
	and 
	$$
	a_{(\ell_1+\ell_2-\ell_3)/2},\,a_{(\ell_1+\ell_3-\ell_2)/2},\,a_{(\ell_2+\ell_3-\ell_1)/2},\,a_{(\ell_1+\ell_2+\ell_3)/2}\to\left(\frac{2}{\pi}\right)^{1/4}. 
	$$ 
	In conjunction with \eqref{sequence},
	\begin{align*} 
		\liminf_{\ell\to\infty} (\ell_1+\ell_2+\ell_3)\begin{pmatrix} \ell_1 & \ell_2 &\ell_3 \\
			0& 0 & 0
		\end{pmatrix}\geq \sqrt{\frac{2}{\pi}}.
	\end{align*} 
\indent The case where $\ell_1+\ell_2=\ell_3$, $\ell_1+\ell_3=\ell_2$, or $\ell_2+\ell_3=\ell_1$ is similar and only requires formal modifications.

	The assertion follows.
\end{proof}
\begin{lemma} 	\label{3j upper bound}  There holds
	\begin{align*} 
\sup_{\ell_1,\,\ell_2,\,\ell_3}		&\bigg|\begin{pmatrix} \ell_1 & \ell_2 &\ell_3 \\
			0& 0 & 0
		\end{pmatrix}\bigg|\,\big[(\ell_1+\ell_2-\ell_3+1)\,(\ell_1+\ell_3-\ell_2+1)\,(\ell_2+\ell_3-\ell_1+1)\,(\ell_1+\ell_2+\ell_3+1)\big]^{1/4} \\&\qquad <\infty.
	\end{align*} 
where the supremum is taken over all integers $\ell_1,\,\ell_2\geq 0$ and all $\ell_3\in \Gamma_{\ell_1\ell_2}$.
\end{lemma}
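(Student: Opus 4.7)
The plan is to proceed in the spirit of the proof of Lemma \ref{3j lower bound}: start from the explicit formula \eqref{wigner even} and reduce the claimed bound to a combinatorial asymptotic estimate, producing an upper bound this time rather than a lower bound. If $\ell_1+\ell_2+\ell_3$ is odd, then the $3j$-symbol vanishes by \eqref{wigner odd}; assume therefore that $\ell_1+\ell_2+\ell_3$ is even, and set $p=(\ell_1+\ell_2-\ell_3)/2$, $q=(\ell_1+\ell_3-\ell_2)/2$, $r=(\ell_2+\ell_3-\ell_1)/2$, so that $p,\,q,\,r\geq 0$ are integers with $p+q+r=(\ell_1+\ell_2+\ell_3)/2$.

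The first step is to rewrite \eqref{wigner even} in the equivalent form
$$
\begin{pmatrix} \ell_1 & \ell_2 & \ell_3 \\ 0 & 0 & 0 \end{pmatrix}^{2} = \frac{\binom{2p}{p}\,\binom{2q}{q}\,\binom{2r}{r}}{(2(p+q+r)+1)\,\binom{2(p+q+r)}{p+q+r}},
$$
which is a direct regrouping of factorials. The advantage of this expression over the identity \eqref{sequence} exploited in the proof of Lemma \ref{3j lower bound} is that it remains well-behaved at the boundary even when one of $p,\,q,\,r$ vanishes, sidestepping the $a_0=0$ degeneracy.

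The second step is to invoke the two-sided estimate $\binom{2k}{k}\asymp 4^{k}\,(k+1)^{-1/2}$, valid uniformly for integers $k\geq 0$ by Stirling's formula. Substituting into the identity above and cancelling the factors of $4^{p+q+r}$ yields
$$
\begin{pmatrix} \ell_1 & \ell_2 & \ell_3 \\ 0 & 0 & 0 \end{pmatrix}^{2} \leq C\,\frac{\sqrt{p+q+r+1}}{(2(p+q+r)+1)\,\sqrt{(p+1)(q+1)(r+1)}}
$$
for some universal constant $C$. Multiplying this inequality by $\sqrt{(2p+1)(2q+1)(2r+1)(2(p+q+r)+1)}$ and using that $(2k+1)/(k+1)\leq 2$ for $k\geq 0$ together with $(p+q+r+1)/(2(p+q+r)+1)\leq 1$, the right-hand side becomes bounded by a universal constant; taking square roots yields the claim.

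I do not expect any step to be a serious obstacle. The main technical content is the algebraic identity of the first step, which is routine; the asymptotic bound on central binomial coefficients is standard; and the final arithmetic is elementary. The only delicate point is maintaining uniformity at the boundary where one of $p,\,q,\,r$ vanishes, and this is handled cleanly by the binomial coefficient reformulation, which avoids the need to treat the small-parameter cases separately.
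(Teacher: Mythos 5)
Your proof is correct and follows the same basic strategy the paper has in mind: express the square of the $3j$-symbol using the explicit factorial formula \eqref{wigner even}, regroup into central binomial coefficients, and apply Stirling-type asymptotics. Your algebraic identity
\[
\begin{pmatrix} \ell_1 & \ell_2 & \ell_3 \\ 0 & 0 & 0 \end{pmatrix}^{2} = \frac{\binom{2p}{p}\,\binom{2q}{q}\,\binom{2r}{r}}{(2(p+q+r)+1)\,\binom{2(p+q+r)}{p+q+r}}
\]
is equivalent to the identity \eqref{sequence} used in the proof of Lemma \ref{3j lower bound}, up to the choice of normalization of the sequence $a_k$.

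The one place where you genuinely improve on what the paper writes is the treatment of the boundary. The paper's proof says only ``this follows as in the proof of Lemma \ref{3j lower bound},'' but a literal replay of that argument uses \eqref{sequence}, whose right-hand side involves $a_{(\ell_1+\ell_2-\ell_3)/2}$, $a_{(\ell_1+\ell_3-\ell_2)/2}$, $a_{(\ell_2+\ell_3-\ell_1)/2}$. Since $a_0 = 0$ and the bound \eqref{inf} is stated only for $k\geq 1$, the cases where $\ell_3 = \ell_1 + \ell_2$ (or a permutation, i.e.\ one of $p,\,q,\,r$ equal to zero) would require a separate elementary verification. Those boundary triples are precisely the reason the statement of Lemma \ref{3j upper bound} shifts each factor by $+1$. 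Your reformulation with the normalization $\binom{2k}{k}\asymp 4^{k}\,(k+1)^{-1/2}$, which is uniform down to $k=0$, handles all cases at once and makes the asserted uniformity transparent. In this sense your write-up is a cleaner, self-contained version of the intended argument rather than a different route.
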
 
\begin{proof}
	This follows from combining  \eqref{inf} and \eqref{sequence} as in the proof of Lemma \ref{3j lower bound}.
\end{proof}
\begin{lemma} \label{summation formula 2} 
	
	There holds 
	$$
\sup_{\ell_1,\,\ell_2}	\sum_{\ell\in \Gamma_{\ell_1\ell_2}} \ell\begin{pmatrix} \ell_1 & \ell_2 & \ell \\
		0& 0 & 0
	\end{pmatrix}^2<\infty 
	$$
	where the supremum is taken over all integers $\ell_1,\,\ell_2\geq 0$. 
\end{lemma}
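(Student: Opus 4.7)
My plan is to prove the bound by deriving the closure identity
$$
\sum_{\ell \in \Gamma_{\ell_1\ell_2}} (2\,\ell + 1)\,\begin{pmatrix} \ell_1 & \ell_2 & \ell \\ 0 & 0 & 0 \end{pmatrix}^2 = 1,
$$
which follows directly from Lemma \ref{product} once we evaluate both sides at a suitable point on $S$. This sidesteps the need to work with the explicit factorial formula or the upper bound of Lemma \ref{3j upper bound}.

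Concretely, I would first recall from Appendix \ref{spherical harmonics} that $v_\ell(x^1,x^2,x^3) = P_\ell(-x^3)$, and that $P_\ell(1) = 1$ for every $\ell \geq 0$. Consequently, $v_\ell(0,0,-1) = 1$ for every non-negative integer $\ell$. Plugging the south pole into the expansion provided by Lemma \ref{product} then yields the displayed identity at once.

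Finally, since $\ell \leq (2\,\ell + 1)/2$, I would conclude
$$
\sum_{\ell \in \Gamma_{\ell_1\ell_2}} \ell\,\begin{pmatrix} \ell_1 & \ell_2 & \ell \\ 0 & 0 & 0 \end{pmatrix}^2 \leq \frac{1}{2}\,\sum_{\ell \in \Gamma_{\ell_1\ell_2}} (2\,\ell+1)\,\begin{pmatrix} \ell_1 & \ell_2 & \ell \\ 0 & 0 & 0 \end{pmatrix}^2 = \frac{1}{2},
$$
uniformly in $\ell_1,\ell_2 \geq 0$. There is no real obstacle here: the only insight is that the closure relation is already encoded in Lemma \ref{product}, after which the estimate is trivial. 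The more refined asymptotics of Lemma \ref{3j upper bound} would lead to a substantially more involved Riemann-sum argument and are not needed for this particular statement.
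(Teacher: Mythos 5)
Your proposal is correct, and it is genuinely different from — and cleaner than — the paper's proof. The paper invokes the pointwise bound of Lemma~\ref{3j upper bound},
\[
\begin{pmatrix} \ell_1 & \ell_2 &\ell \\ 0& 0 & 0 \end{pmatrix}^2
= O(1)\,\big[(\ell_1+\ell_2-\ell+1)\,(\ell_1+\ell-\ell_2+1)\,(\ell_2+\ell-\ell_1+1)\,(\ell_1+\ell_2+\ell+1)\big]^{-1/2},
\]
and then controls the resulting sum by a Riemann-sum comparison with $\int_0^{2\ell_2}(2\ell_2+1-x)^{-1/2}\,x^{-1/2}\,\mathrm{d}x=O(1)$. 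Your observation is that the sum is already controlled \emph{exactly} by the closure relation
\[
\sum_{\ell\in\Gamma_{\ell_1\ell_2}}(2\ell+1)\,\begin{pmatrix} \ell_1 & \ell_2 &\ell \\ 0& 0 & 0 \end{pmatrix}^2=1,
\]
which you obtain by evaluating the product expansion of Lemma~\ref{product} at the south pole $(0,0,-1)$ and using $v_\ell(0,0,-1)=P_\ell(1)=1$ (the latter is immediate from the generating function \eqref{legendre generating2} at $s=1$). Since $\ell\le(2\ell+1)/2$, the supremum in the statement is at most $1/2$. This buys you two things the paper's argument does not: an explicit uniform constant, and independence from the Stirling-type asymptotics needed for Lemma~\ref{3j upper bound} — in fact, your argument makes Lemma~\ref{3j upper bound} dispensable for the purposes of Lemma~\ref{summation formula 2}. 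The trade-off is that the paper's Riemann-sum technique demonstrates a pattern that generalizes to weighted sums that are not covered by a clean closure identity, but for the statement at hand your route is shorter and sharper.
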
 
\begin{proof} 
	Assume that $\ell_1\geq \ell_2.$ By Lemma \ref{3j upper bound}, we have
	\begin{align*}
		&\sum_{\ell\in \Gamma_{\ell_1\ell_2}} \ell\begin{pmatrix} \ell_1 & \ell_2 & \ell \\
			0& 0 & 0
		\end{pmatrix}^2\\& \qquad =O(1)\,\sum_{\ell=\ell_1-\ell_2}^{\ell_1+\ell_2}\ell\,\left[(\ell_1+\ell_2-\ell+1)\,(\ell_1+\ell-\ell_2+1)\,(\ell_2+\ell-\ell_1+1)\,(\ell_1+\ell_2+\ell+1)\right]^{-1/2}.
	\end{align*}
	Note that
	$$
	\ell\,(\ell_1+\ell-\ell_2+1)^{-1/2}\,(\ell_1+\ell_2+\ell+1)^{-1/2}
	\leq 1.
	$$
	Moreover, 
	\begin{align*} 
		\sum_{\ell=\ell_1-\ell_2}^{\ell_1+\ell_2}(\ell_1+\ell_2-\ell+1)^{-1/2}\,(\ell_2+\ell-\ell_1+1)^{-1/2}
		&=\sum_{i=0}^{2\,\ell_2}(2\,\ell_2-i+1)^{-1/2}\,(i+1)^{-1/2}
		\\&=O(1)\,\int_{0}^{2\,\ell_2}(2\,\ell_2+1-x)^{-1/2}\,x^{-1/2}\,\mathrm{d}x
		\\&=O(1).
	\end{align*}
	\indent The assertion follows.
\end{proof}

\end{appendices}

%\bibliographystyle{amsplainnat}
%\bibliography{literature} 
\begin{bibdiv}
	\begin{biblist}
		
		\bib{AgostinianiFogagnoloMazzieri}{article}{
			author={Agostiniani, Virginia},
			author={Fogagnolo, Mattia},
			author={Mazzieri, Lorenzo},
			title={Minkowski inequalities via nonlinear potential theory},
			date={2022},
			ISSN={0003-9527},
			journal={Arch. Ration. Mech. Anal.},
			volume={244},
			number={1},
			pages={51\ndash 85},
			url={https://doi.org/10.1007/s00205-022-01756-6},
			review={\MR{4393385}},
		}
		
		\bib{Brendle}{article}{
			author={Brendle, Simon},
			title={The isoperimetric inequality for a minimal submanifold in
				{E}uclidean space},
			date={2021},
			ISSN={0894-0347},
			journal={J. Amer. Math. Soc.},
			volume={34},
			number={2},
			pages={595\ndash 603},
			url={https://doi.org/10.1090/jams/969},
			review={\MR{4280868}},
		}
		
		\bib{BrendleEichmair}{article}{
			author={Brendle, Simon},
			author={Eichmair, Michael},
			title={Proof of the {M}ichael-{S}imon-{S}obolev inequality using optimal
				transport},
			date={2023},
			journal={J. Reine Angew. Math.},
			volume={804},
			pages={1--10},
			url={https://arxiv.org/abs/2205.10284},
		}
		
		\bib{angstnomore}{article}{
			author={Chodosh, Otis},
			author={Eichmair, Michael},
			title={Global uniqueness of large stable {CMC} spheres in asymptotically
				flat {R}iemannian 3-manifolds},
			date={2022},
			ISSN={0012-7094},
			journal={Duke Math. J.},
			volume={171},
			number={{1} },
			pages={1\ndash 31},
			url={https://doi.org/10.1215/00127094-2021-0043},
			review={\MR{4364730}},
		}
		
		\bib{CourantHilbert}{book}{
			author={Courant, Richard},
			author={Hilbert, David},
			title={Methods of mathematical physics. {V}ol. {I}},
			publisher={Interscience Publishers, Inc., New York, N.Y.},
			date={1953},
			note={xv+561 pp.},
			review={\MR{0065391}},
		}
		
		\bib{DalphinHenrotMasnouTakahashi}{article}{
			author={Dalphin, J\'er\'emy},
			author={Henrot, Antoine},
			author={Masnou, Simon},
			author={Takahashi, Tak\'eo},
			title={On the minimization of total mean curvature},
			date={2016},
			ISSN={1050-6926},
			journal={J. Geom. Anal.},
			volume={26},
			number={4},
			pages={2729\ndash 2750},
			url={https://doi.org/10.1007/s12220-015-9646-y},
			review={\MR{3544938}},
		}
		
		\bib{DeLellisMuller}{article}{
			author={De~Lellis, Camillo},
			author={M\"{u}ller, Stefan},
			title={Optimal rigidity estimates for nearly umbilical surfaces},
			date={2005},
			ISSN={0022-040X},
			journal={J. Differential Geom.},
			volume={69},
			number={1},
			pages={75\ndash 110},
			url={https://doi.org/10.4310/jdg/1121540340},
			review={\MR{2169583}},
		}
		
				\bib{Edmonds2}{book}{
			author={Edmonds, Alan},
			title={Angular momentum in quantum mechanics},
			series={Investigations in Physics},
			publisher={Princeton University Press, Princeton, NJ},
			date={1957},
			note={viii+146 pp.},
			review={\MR{95700}},
		}
		
	%	\bib{Edmonds}{book}{
	%		author={Edmonds, Alan~R.},
	%		title={Angular momentum in quantum mechanics},
	%		series={Princeton Landmarks in Physics},
	%		publisher={Princeton University Press, Princeton, NJ},
	%		date={1996},
	%		ISBN={0-691-02589-4},
	%		note={viii+146 pp.},
	%		review={\MR{1373869}},
	%	}

		\bib{EichmairKoerberMetzgerSchulze}{article}{
			author={Eichmair, Michael},
			author={Koerber, Thomas},
			author={Metzger, Jan},
			author={Schulze, Felix},
			title={Huisken-{Y}au-type uniqueness for area-constrained {W}illmore
				spheres},
				date={2024},
			journal={Duke Math. J.},
			volume={173},
			number={9},
			pages={1677–1730},

		}
		\bib{FreireSchwartz}{article}{
			author={Freire, Alexandre and Schwartz, Fernando},
			title={Mass-capacity inequalities for conformally flat manifolds with boundary},
			date={2014},
			ISSN={0360-5302,1532-4133},
			journal={Comm. Partial Differential Equations},
			volume={39},
			number={1},
			pages={98--119},
			url={https://doi.org/10.1080/03605302.2013.851211},
			review={\MR{3169780}},
		}
		
		\bib{GilbargTrudinger}{book}{
			author={Gilbarg, David},
			author={Trudinger, Neil~S.},
			title={Elliptic partial differential equations of second order},
			series={Classics in Mathematics},
			publisher={Springer-Verlag, Berlin},
			date={2001},
			ISBN={3-540-41160-7},
			note={Reprint of the 1998 edition},
			review={\MR{1814364}},
		}
		
		\bib{Glaudo}{article}{
			author={Glaudo, Federico},
			title={Minkowski inequality for nearly spherical domains},
			date={2022},
			ISSN={0001-8708},
			journal={Adv. Math.},
			volume={408},
			number={part B},
			pages={Paper No. 108595},
			url={https://doi.org/10.1016/j.aim.2022.108595},
			review={\MR{4458912}},
		}
		
		\bib{GuanLi}{article}{
			author={Guan, Pengfei},
			author={Li, Junfang},
			title={The quermassintegral inequalities for {$k$}-convex starshaped
				domains},
			date={2009},
			ISSN={0001-8708},
			journal={Adv. Math.},
			volume={221},
			number={5},
			pages={1725\ndash 1732},
			url={https://doi.org/10.1016/j.aim.2009.03.005},
			review={\MR{2522433}},
		}
		
		\bib{MichaelSimon}{article}{
			author={Michael, James~H.},
			author={Simon, Leon~M.},
			title={Sobolev and mean-value inequalities on generalized submanifolds
				of {$\mathbb{R}^{n}$}},
			date={1973},
			ISSN={0010-3640},
			journal={Comm. Pure Appl. Math.},
			volume={26},
			pages={361\ndash 379},
			url={https://doi.org/10.1002/cpa.3160260305},
			review={\MR{344978}},
		}
		
		\bib{Minkowski}{article}{
			author={Minkowski, Hermann},
			title={Volumen und {O}berfl{\"a}che},
			date={1903},
			journal={Mathematische Annalen},
			volume={57},
			number={4},
			pages={447\ndash 495},
			url={https://doi.org/10.1007/BF01445180},
		}
		
		\bib{Perez:thesis}{thesis}{
			author={Perez, Daniel Raoul},
			title={On nearly umbilical hypersurfaces},
			type={Doctoral Thesis},
			address={available at
				\url{https://www.math.ias.edu/delellis/sites/math.ias.edu.delellis/files/Daniel.pdf}},
			date={2011},
		}
		
		\bib{SteinWeiss}{book}{
			author={Stein, Elias~M.},
			author={Weiss, Guido},
			title={Introduction to {F}ourier analysis on {E}uclidean spaces},
			series={Princeton Mathematical Series, No. 32},
			publisher={Princeton University Press, Princeton, N.J.},
			date={1971},
			note={x+297},
			review={\MR{0304972}},
		}
		
	\end{biblist}
\end{bibdiv}

\end{document}